\tikzstyle{decision} = [diamond, draw, fill=blue!20, 
\tikzstyle{block} = [rectangle, draw, fill=blue!20, 
\tikzstyle{line} = [draw, -latex']
\tikzstyle{cloud} = [draw, ellipse,fill=red!20, node distance=3cm,
\tikzset{main node/.style={circle,fill=blue!20,draw,minimum size=1cm,inner sep=0pt},  }
\begin{document}
\title[Semi-discretization schemes]{Entropy dissipation semi-discretization schemes for Fokker-Planck equations}
\author[Chow]{Shui-Nee Chow}
\address{School of Mathematics, Georgia Institute of Technology,
Atlanta, GA 30332 U.S.A.}
\email{chow@math.gatech.edu}
\author[Dieci]{Luca Dieci}
%\address{School of Mathematics, Georgia Institute of Technology,
%Atlanta, GA 30332 U.S.A.}
\email{dieci@math.gatech.edu}
\author[Li]{Wuchen Li}
%\address{School of Mathematics, Georgia Institute of Technology,
%Atlanta, GA 30332 U.S.A.}
\email{wcli@gatech.edu}
\author[Zhou]{Haomin Zhou}
%\address{School of Mathematics, Georgia Institute of Technology,
%Atlanta, GA 30332 U.S.A.}
\email{haomin.zhou@math.gatech.edu}

\thanks{This work is partially supported by NSF 
Awards DMS\textendash{}1042998, DMS\textendash{}1419027,
and ONR Award N000141310408.}

\subjclass{65L07, 65L12}

\keywords{Fokker-Planck equation; Optimal transport; Entropy dissipation; Numerics}

\begin{abstract}
We propose a new semi-discretization scheme to approximate nonlinear Fokker-Planck equations,
by exploiting the gradient flow structures with respect to the 2-Wasserstein metric in the space of probability densities.
We discretize the underlying state by a finite graph and define a discrete 2-Wasserstein metric in the discrete probability space. 
Based on such metric, we introduce a gradient flow of the discrete free energy as semi discretization scheme. We prove that the scheme maintains dissipativity of the 
free energy and converges to a discrete Gibbs measure at exponential dissipation rate. 
We exhibit these properties on several numerical examples. 
\end{abstract}

\maketitle
%\markboth{Chow, Dieci, Li and Zhou}

\section{Introduction}

In this paper we introduce and study semi-discretization schemes for certain types of partial 
differential equations (PDEs) \cite{JKO}, which are gradient flows from the viewpoint 
of optimal transportation theory 
\cite{ambrosio2003lecture, am2006, bb, carrillo2003kinetic, O, OV, vil2003, vil2008}.

Consider a nonlinear Fokker-Planck equation \cite{benedetto1998non, carrillo2003kinetic} 
\begin{equation}\label{PDE}
\frac{\partial \rho}{\partial t}=\nabla\cdot[\rho \nabla (V(x)+ 
\int_{\mathbb{R}^d}W(x,y)\rho(t,y)dy)]+\beta \Delta \rho.
\end{equation}
The unknown $\rho(t, \cdot)$ is a probability density function supported on $\mathbb{R}^d$, the functions
$V: \mathbb{R}^d\rightarrow \mathbb{R}$, and $W: \mathbb{R}^d\times \mathbb{R}^d\rightarrow \mathbb{R}$
are smooth and further $W(x,y)=W(y,x)$ for any $x, y\in \mathbb{R}^d$.

To \eqref{PDE} is associated a functional $\mathcal{F}: \mathcal{P}(\mathbb{R}^d)\rightarrow \mathbb{R}$, 
called free energy 
\begin{equation}\label{energy}
\mathcal{ F}(\rho)=\int_{\mathbb{R}^d}V(x)\rho(x)dx+\frac{1}{2}
\int_{\mathbb{R}^d\times \mathbb{R}^d}W(x,y)\rho(x)\rho(y)dxdy+\beta\int_{\mathbb{R}^d}\rho(x)\log\rho(x)dx,
\end{equation}
which is a summation of linear potential energy, interaction energy and 
linear entropy, from left to right. 
It is known that the free energy \eqref{energy} is a Lyapunov function for \eqref{PDE}:
\begin{equation*}
\frac{d}{dt}\mathcal{F}(\rho(t,\cdot))=-\int_{\mathbb{R}^d} (\nabla F(x,\rho))^2\rho(t,x) dx\leq 0,
\end{equation*} 
where $F(x,\rho):=\frac{\delta}{\delta\rho(x)}\mathcal{F}(\rho)(x)$, and $\frac{\delta}{\delta\rho(x)}$ is the $L^2$ first variation.
Under suitable conditions on $V$ and $W$, the solution $\rho(t,\cdot)$ of \eqref{PDE} 
converges to an equilibrium $\rho^*(x)$ named Gibbs measure, where 
\begin{equation*}
\rho^*(x)=\frac{1}{K}e^{-\frac{V(x)+\int_{\mathbb{R}^d} W(x,y)\rho^*(y)dy}{\beta}},\quad 
\textrm{where}\quad K=\int_{\mathbb{R}^d} e^{-\frac{V(x)+\int_{\mathbb{R}^d} W(x,y)\rho^*(y)dy}{\beta}} dx.
\end{equation*}
Recent work on optimal transport treats the probability set $\mathcal{P}(\mathbb{R}^d)$
as a ``Remannian manifold'' equipped with the 2-Wasserstein metric. 
From this viewpoint, \eqref{PDE} is a gradient flow of the
free energy $\mathcal{F}(\rho)$ on $\mathcal{P}(\mathbb{R}^d)$, 
see \cite{am2006, vil2003, vil2008}. Furthermore, requiring $W(x,y)=W(|x-y|)$ with suitable conditions, 
Carrillo, McCann and Villani show that $\rho(t,\cdot)$ converges to a Gibbs measure
with exponential rate, see \cite{carrillo2003kinetic}.

In this paper, we consider a similar matter in the discrete setting. In other words, we shall derive a semi-discretization scheme for \eqref{PDE} (continuous in time and discrete in spatial space), 
which also has a gradient flow structure with respect to a discrete 2-Wasserstein metric in the discrete probability space. It is worth mentioning that the underlying space \eqref{PDE} can be a variety other 
than $\mathbb{R}^d$.  For instance, the domain can be a bounded open set,
with a zero-flux conditions or periodic conditions. In this paper, we apply the setting of finite graph to consider all these cases. 

Consider a graph $G=(V,E)$ to discretize the spatial domain, where 
$V$ is the vertex set  
\begin{equation*}
V=\{1,2,\cdots, n\},
\end{equation*} 
and $E$ is the edge set.
The adjacency set of the vertex $i\in V$ is denoted by
\begin{equation*}
N(i)=\{j\in V\mid (i,j)\in E\}.
\end{equation*} 
Here $i\in V$ represents a point in $\mathbb{R}^d$, and $(i,j)$ is shorthand 
for an edge connecting $i$ and $j$. For concreteness, we can think of $G$ as a lattice corresponding to a uniform
discretization of the domain with spacing $\Delta x$. 

Consider a discrete probability set
%\footnote{We abuse the notation. 
%We let $\rho_i$ represent a discrete probability measure, instead of probability density.} 
supported on all vertices:
\begin{equation*}
\mathcal{P}(G)=\{\rho=(\rho_i)_{i=1}^n\in \mathbb{R}^{n}\mid \sum_{i=1}^n\rho_i=1,~\rho_i\geq 0,~i\in V \}.
\end{equation*}
Moreover, we consider a discrete free energy of $\mathcal{ F}(\rho)$, as an analog of \eqref{energy}
\begin{equation*}
\mathcal{F}(\rho)=\sum_{i=1}^nv_i\rho_i+\frac{1}{2}\sum_{i=1}^n\sum_{j=1}^n
w_{ij}\rho_i\rho_j+\beta\sum_{i=1}^n\rho_i\log\rho_i,
\end{equation*}
where $(v_i)_{i=1}^n=(V(i))_{i=1}^n$ and $(w_{ij})_{1\leq i, j\leq n}=(W(i, j))_{1\leq i, j\leq n}$ 
are a fixed vector, and a fixed symmetric matrix, respectively. By this setting, we will equip $\mathcal{P}(G)$ with a ``discrete'' 2-Wasserstein metric, then derive and analyze the gradient flow of discrete free energy under this metric. 

Delaying the derivation details until section \ref{derivation}, we show the 
semi-discretization directly.  We propose to take 
\begin{equation}\label{a1}
\frac{d\rho_i}{dt}=\frac{1}{\Delta x^2}\{\sum_{j\in N(i)} \rho_j( F_j(\rho)- F_i(\rho))_+
-\sum_{j\in N(i)}\rho_{i}( F_i(\rho)-F_j(\rho))_+\},
\end{equation}
where $i\in V$, $(\cdot)_+=\max\{\cdot, 0\}$ and
\begin{equation*}
F_i(\rho)=\frac{\partial}{\partial \rho_i}\mathcal{F}(\rho), \quad \textrm{for any $i\in V$}\ .
\end{equation*}
Besides showing that \eqref{a1} is a well defined ordinary differential equation (ODE), 
we demonstrate that \eqref{a1} has a gradient flow structure. Firstly, the free energy is
a Lyapunov function of \eqref{a1}:
\begin{equation*}
\frac{d}{dt}\mathcal{F}(\rho(t))=-\sum_{(i,j)\in E}[(\frac{F_i(\rho)-F_j(\rho)}{\Delta x})_+]^2\rho_i\leq 0.
\end{equation*}
Then, if $\rho(t)$ converges to an equilibrium $\rho^{\infty}$, then we will show that
such equilibrium is a discrete Gibbs measure 
\begin{equation*}
\rho_i^{\infty}=\frac{1}{K}e^{-\frac{v_i+\sum_{j=1}^nw_{ij}\rho_j^{\infty}}{\beta}},\quad K=
\sum_{i=1}^ne^{-\frac{v_i+\sum_{j=1}^nw_{ij}\rho_j^{\infty}}{\beta}}.
\end{equation*}
Furthermore, if $\rho^{\infty}$ is a strictly local minimizer of the free energy,
and $\rho(t)$ is in its basin of attraction for the gradient dynamics,
then we will show that the convergence speed
%, $\rho(t)\rightarrow \rho^{\infty}$, 
is exponential:
\begin{equation*}
\mathcal{F}(\rho(t))-\mathcal{F}(\rho^{\infty})\leq e^{-Ct}(\mathcal{F}(\rho^0)-\mathcal{F}(\rho^{\infty})),
\end{equation*}
where $C$ is a positive constant. 
In fact, we will say more about this convergence. 
We will give an explicit formula for the asymptotic convergence rate, which mimics 
the role of the Hessian of the free energy at the Gibbs measure w.r.t. the discrete 2-Wasserstein metric. Finally, we will show that \eqref{a1} is a consistent scheme for the PDE \eqref{PDE}, and further derive a general consistent scheme for general drift diffusion systems, not necessarily gradient flows. 
 
The semi-discretization scheme in this paper is largely inspired by \cite{chow2012, li_thesis},
the upwind scheme of \cite{upwind}, and optimal transport theory \cite{vil2008}. 
In addition, the convergence result is influenced by the work of Carrillo, McCann and Villani,  
\cite{carrillo2003kinetic}. Our method can be viewed as a discrete 
entropy dissipation method \cite{JJM, EP}, with a dynamical twist. 
 
In the literature, people have studied 2-Wasserstein metric and Fokker-Planck equations
in discrete settings for a long time \cite{CC,  c2015, c2016, EM2, EM1,  M}. 
Maas \cite{EM1} and Mielke \cite{M} introduce a different discrete 2-Wasserstein metric. 
Based on such metric, they analyze the convergence rate of some schemes for one-dimensional
linear Fokker-Planck equations. Our scheme shows exponential convergence for all 
linear and nonlinear cases. Carrillo, Chertock, Huang, Wolansky \cite{c2015, c2016} 
have recently designed several algorithms based on entropy dissipation viewpoint.
Particularly, the first order scheme designed in \cite{c2015}
shares some similarities with \eqref{a1} for a lattice graph. %Although \eqref{a1} can be used in a broader context, such as a manifold approximated by a finite graph. 
However, we advocate designing semi discretization schemes by using directly the viewpoint of discrete Wasserstein metric. We believe that the metric would be useful for deriving various time discretization for semi discretization scheme in the light of \cite{JKO}. In addition, the gradient flow of entropy with this metric suggests an interesting nonlinear discretization of Laplacian operator. This effect introduces many dynamical properties of the semi-discretization scheme, such as exponential convergence. 

This paper is arranged as follows. In section \ref{derivation}, we derive \eqref{a1}
based on a discrete 2-Wasserstein metric.  With respect to this metric, \eqref{a1}'s gradient
flow properties are given. 
In section \ref{convergence}, we show that the solution of \eqref{a1} converges to a
discrete Gibbs measure exponentially fast. 
Numerical analysis and several experiments on \eqref{a1} are discussed in sections
\ref{na} and \ref{numerical}.   
%Finally, in the Appendix, we give a fewexplicit models of (simple) discretizations to elucidate how the speed of convergence to a stationary measure depends on the boundary conditions.

\section{Semi-discretization scheme}\label{derivation}
In this section, we show that \eqref{a1} is a gradient flow for the discrete free energy 
on the probability set $\mathcal{P}(G)$.
First, we define a discrete 2-Wasserstein metric on $\mathcal{P}(G)$. Second, based on such metric, 
we derive \eqref{a1} as a gradient flow of the discrete free energy.
Third, we show that \eqref{a1} is a well defined ODE with gradient flow structure.  
\subsection{Discrete 2-Wasserstein metric}
The 2-Wasserstein metric (Benamou-Brenier formula, \cite{bb}) is a metric defined 
on a probability set supported on $\mathbb{R}^d$: 
\begin{equation*}
W_2(\rho^0, \rho^1)^2=\inf_{\Phi}\{{\int_0^1 (\nabla\Phi, \nabla \Phi)_\rho dt}~:
~\frac{\partial \rho}{\partial t}+\nabla \cdot (\rho\nabla \Phi )=0, ~\rho(0)=\rho^0,~\rho(1)=\rho^1 \},
\end{equation*}
where $(\cdot, \cdot)_\rho$ represents an inner product on the probability set:
\begin{equation*}
(\nabla \Phi, \nabla\Phi)_\rho=\int_{\mathbb{R}^d}(\nabla \Phi(t,x))^2\rho(t,x)dx,
\end{equation*}
and the infimum is taken among the %potential vector fields defined by the 
potential functions $\Phi(t,x)\in \mathbb{R}^d$.

We give a similar metric definition on a discrete setting, which is a finite graph $G=(V, E)$. 
Consider a probability set supported on $V$ with all positive measures:
\begin{equation*}
\mathcal{P}_o(G)=\{\rho=(\rho_i)_{i=1}^n \mid \sum_{i=1}^n \rho_i=1,\quad \rho_i>0,\quad \textrm{for any}~i\in V\}.
\end{equation*}
We use three steps to define the metric on $\mathcal{P}_o(G)$. 
Firstly, we define a potential vector field on graph 
\begin{equation*}
\nabla_G\Phi:=(\frac{1}{\Delta x}(\Phi_i-\Phi_j))_{(i,j)\in E},
\end{equation*}
with the potential function $\Phi:=(\Phi_i)_{i=1}^n$. Secondly, we introduce the discrete analog of $\nabla \cdot (\rho \nabla \Phi)$ by:
\begin{equation*}
\textrm{div}_G(\rho \nabla_G\Phi):=
\bigl(-\frac{1}{\Delta x^2}\sum_{j\in N(i)}(\Phi_i-\Phi_j)g_{ij}(\rho)\bigr)_{i=1}^n,
\end{equation*}
where \begin{equation}\label{gij}
g_{ij}(\rho):=
\begin{cases}
\rho_i\quad &\textrm{if} ~F_i(\rho)>F_j(\rho),~j\in N(i),\\
\rho_j\quad &\textrm{if}~F_i(\rho)<F_j(\rho), ~j\in N(i) ,\\
\frac{\rho_i+\rho_j}{2}\quad &\textrm{if}~F_i(\rho)=F_j(\rho), ~j\in N(i),
\end{cases}
\end{equation}
and $F_i(\rho):=\frac{\partial}{\partial \rho_i}\mathcal{F}(\rho)$. It is worth mentioning that $g_{ij}$ defined in $\eqref{gij}$ has multiple choices, such as $g_{ij}=\frac{\rho_i+\rho_j}{2}$ in \cite{li2017}. 
Lastly, we construct an inner product on $\mathcal{P}_o(G)$:
\begin{equation*}
(\nabla_G\Phi,\nabla_G \Phi)_\rho:=\frac{1}{2\Delta x^2}\sum_{(i,j)\in E}(\Phi_i-\Phi_j)^2g_{ij}(\rho),
\end{equation*}
where $\frac{1}{2}$ is due to the fact that every edge in $G$ is counted twice, i.e. $(i,j)$, $(j,i) \in E$. 

We are now ready to introduce a discrete 2-Wasserstein metric on $\mathcal{P}_o(G)$. 
\begin{definition}\label{def_metric}
For any $\rho^0$, $\rho^1\in \mathcal{P}_o(G)$, define $W_2\colon\mathcal{P}_o(G)\times\mathcal{P}_o(G)\rightarrow\mathbb{R}:$
\begin{equation*}
\left(W_{2}(\rho^0,\rho^1)\right)^2:=\inf_{\Phi}~\{\int_0^1(\nabla_G \Phi, \nabla_G \Phi)_\rho dt~:~
\frac{d\rho}{dt}+\textrm{div}_G(\rho \nabla_G\Phi)=0,\quad\rho(0)=\rho^0,\quad \rho(1)=\rho^1\},
\end{equation*}
where the infimum is taken over all $\Phi$ for which $\rho$ is a continuously differentiable curve
$\rho:[0,1]\rightarrow \mathcal{P}_o(G)$.
\end{definition}

We justify that $W_2$ in Definition \ref{def_metric} is a well defined metric. We endow $\mathcal{P}_o(G)$ with an inner product on its tangent space %at a point $\rho\in \mathcal{P}_o(G)$.
\begin{equation*}
T_\rho\mathcal{P}_o(G)=\{(\sigma_i)_{i=1}^n\in \mathbb{R}^n\mid \sum_{i=1}^n\sigma_i=0\}.
\end{equation*}
Consider the equivalence relation ``$\sim$'' in $\mathbb{R}^n$ which stands for ``modulo additive constants,''
so that the quotient space is  
\begin{equation*}
\mathbb{R}^n / \sim=\{[\Phi]\mid (\Phi_i)_{i=1}^n\in \mathbb{R}^n\}, \quad
\textrm{where}\quad [\Phi]=\{(\Phi_1+c,\cdots, \Phi_n+c)\mid c\in\mathbb{R}^1\}.
\end{equation*}
We introduce an identification map \begin{equation*}
\tau: ~\mathbb{R}^n / \sim\rightarrow T_\rho\mathcal{P}_o(G),\quad\quad 
\tau([\Phi]):=(\sum_{j\in N(i)}\frac{1}{\Delta x^2}(\Phi_i-\Phi_j)g_{ij}(\rho))_{i=1}^n.
\end{equation*} 
\begin{lemma}\label{l1}
The map $\tau:~ \mathbb{R}^n / \sim\rightarrow T_\rho\mathcal{P}_o(G)$ is a well defined map, 
linear, and one to one. 
\end{lemma}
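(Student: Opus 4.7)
The plan is to verify, in order, the four claims packaged in the statement: that $\tau$ descends to the quotient $\mathbb{R}^n/\sim$, that its image lies in $T_\rho\mathcal{P}_o(G)$, that it is linear, and that it is injective. Well-definedness on the quotient and linearity are structural, being immediate from the fact that $\tau([\Phi])_i$ is built out of the edge differences $\Phi_i-\Phi_j$ alone, so adding a constant to every $\Phi_i$ leaves these differences unchanged and scaling $\Phi$ scales $\tau([\Phi])$ in the expected way. The substantive content lies in the two remaining claims, both of which rest on the symmetry $g_{ij}(\rho)=g_{ji}(\rho)$.

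I would therefore first verify $g_{ij}(\rho)=g_{ji}(\rho)$ directly from the piecewise definition in \eqref{gij} by a short three-case check on the sign of $F_i(\rho)-F_j(\rho)$. With this in hand, to show $\tau([\Phi])\in T_\rho\mathcal{P}_o(G)$ I would compute
\[
\sum_{i=1}^n\tau([\Phi])_i=\frac{1}{\Delta x^2}\sum_{i=1}^n\sum_{j\in N(i)}(\Phi_i-\Phi_j)g_{ij}(\rho),
\]
and observe that each unordered edge $\{i,j\}$ contributes the two terms $(\Phi_i-\Phi_j)g_{ij}(\rho)$ and $(\Phi_j-\Phi_i)g_{ji}(\rho)$, which cancel by symmetry; hence the total sum vanishes.

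For injectivity, suppose $\tau([\Phi])=0$. I would pair with $\Phi$ itself and then symmetrize by relabelling $i\leftrightarrow j$ and using $g_{ij}=g_{ji}$, arriving at the discrete Dirichlet-type identity
\[
0 \;=\; \frac{1}{2\Delta x^2}\sum_{(i,j)\in E}(\Phi_i-\Phi_j)^2\, g_{ij}(\rho).
\]
Since $\rho\in\mathcal{P}_o(G)$ has strictly positive components, each branch of \eqref{gij} gives $g_{ij}(\rho)\geq\min(\rho_i,\rho_j)>0$, so every summand must vanish; hence $\Phi_i=\Phi_j$ on every edge. Under the standing assumption that $G$ is connected, this forces $\Phi$ to be constant, i.e.\ $[\Phi]=0$ in the quotient.

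The only mildly delicate point I foresee is the symmetry $g_{ij}(\rho)=g_{ji}(\rho)$: the ``upwind'' definition treats $i$ and $j$ asymmetrically at first glance, but the short case check dispatches it, and both the tangent-space condition and the Dirichlet identity then follow cleanly. If $G$ were disconnected, the injectivity conclusion would weaken to ``constant on each connected component,'' so I would record connectivity of $G$ as a standing hypothesis for the lemma.
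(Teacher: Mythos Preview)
Your proposal is correct. For well-definedness on the quotient, linearity, and the tangent-space condition $\sum_i\tau([\Phi])_i=0$, your symmetrization via $g_{ij}=g_{ji}$ is exactly the paper's argument.

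For injectivity, however, you take a genuinely different route. You pair $\tau([\Phi])=0$ with $\Phi$ and symmetrize to obtain the Dirichlet identity
\[
0=\frac{1}{2\Delta x^2}\sum_{(i,j)\in E}(\Phi_i-\Phi_j)^2\,g_{ij}(\rho),
\]
then use strict positivity of each $g_{ij}(\rho)$ and connectedness of $G$ to force $\Phi$ constant. The paper instead runs a discrete maximum principle: assuming $\Phi$ is not constant, it selects a vertex $l$ where $\Phi$ attains its maximum $c$ and which has a neighbor $k$ with $\Phi_k<c$; from $\sigma_l=0$ it rewrites
\[
\Phi_l=\frac{\sum_{j\in N(l)} g_{lj}(\rho)\,\Phi_j}{\sum_{j\in N(l)} g_{lj}(\rho)}<c,
\]
a contradiction. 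Your energy argument is slightly slicker and dovetails with the inner product $(\nabla_G\Phi,\nabla_G\Phi)_\rho$ defined immediately after the lemma, so it foreshadows that structure; the paper's maximum-principle argument avoids the pairing computation but relies on the same positivity of the weights. Both need connectedness of $G$, which you correctly flag.
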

\begin{proof}
First, we show that $\tau$ is well defined.   We denote 
\begin{equation*}
\sigma_i=\frac{1}{\Delta x^2}\sum_{j\in N(i)}(\Phi_i-\Phi_j)g_{ij}(\rho).
\end{equation*}
Our task is equivalent to show $\sum_{i=1}^n\sigma_i=0$. Indeed, 
\begin{equation*}
\begin{split}
\sum_{i=1}^n\sigma_i=&\frac{1}{\Delta x^2}\{\sum_{i=1}^n\sum_{j\in N(i)}(\Phi_i-\Phi_j)g_{ij}(\rho)\}\\
=&\frac{1}{\Delta x^2}\{\sum_{(i,j)\in E}\Phi_ig_{ij}(\rho)-\sum_{(i,j)\in E}\Phi_jg_{ij}(\rho)\}\\
&\textrm{Relabel $i$ and $j$ on the first formula}\\
=&\frac{1}{\Delta x^2}\{\sum_{(i,j)\in E}\Phi_jg_{ji}(\rho)-\sum_{(i,j)\in E}\Phi_jg_{ij}(\rho)\}=0.
\end{split}
\end{equation*}
Hence, the map $\tau$ is a well-defined linear map. 

Next, we show $\tau$ is one to one. 
Since $T_\rho\mathcal{P}_o(G)$ and $\mathbb{R}^n / \sim$ are $(n-1)$ dimensional linear spaces, 
we only need to prove $\tau$ is injective. I.e., if 
\begin{equation*}
\sigma_i=\frac{1}{\Delta x^2}\sum_{j\in N(i)}g_{ij}(\rho)(\Phi_i-\Phi_j)=0, \quad 
\textrm{for any $i\in V$},
\end{equation*}
then $[\Phi]=0$, meaning that $\Phi_1=\Phi_2=\cdots=\Phi_n$.

Assume this is not true. Let $c=\max_{i\in V}\Phi_i$. Since the graph $G$ is connected, 
there exists an edge $(k, l)\in E$, such that $\Phi_l=c$ and $\Phi_k<c$.  But, since $\sigma_l=0$, 
we know that 
\begin{equation*}
\Phi_l=\frac{\sum_{j\in N(l)}g_{lj}(\rho) \Phi_j}{\sum_{j\in N(l)}g_{lj}(\rho)}=
c+\frac{\sum_{j\in N(l)}g_{lj}(\rho) (\Phi_j-c)}{\sum_{j\in N(l)}g_{lj}(\rho)}<c,
\end{equation*}
which contradicts $\Phi_l=c$.
\end{proof}
This identification map induces a scalar inner product on $\mathcal{P}_o(G)$.
\begin{definition}\label{d9}
For any two tangent vectors $\sigma^1,\sigma^2\in T_\rho\mathcal{P}_o(G)$, 
we define an inner product 
$g :T_\rho\mathcal{P}_o(G)\times T_\rho\mathcal{P}_o(G)  \rightarrow \mathbb{R}$:
\begin{equation}\begin{split}\label{formula}
g(\sigma^1,\sigma^2):=\sum_{i=1}^n\sigma^1_i\Phi^2_i=\sum_{i=1}^n\Phi^1_i\sigma^2_i=\frac{1}{2\Delta x^2}\sum_{ (i,j)\in E}g_{ij}(\rho)(\Phi_i^1-\Phi_j^1)(\Phi_i^2-\Phi_j^2),
%=&\frac{1}{\Delta x^2}\sum_{i=1}^n\Phi^1_i\sigma^2_i=\frac{1}{\Delta x^2}\sum_{i=1}^n\Phi^2_i\sigma^1_i,
\end{split} 
\end{equation}
where $[\Phi^1]$, $[\Phi^2]\in \mathbb{R}^n/\sim$, are such that 
$\sigma^1=\tau([\Phi^1])$, $\sigma^2=\tau([\Phi^2])$. 
\end{definition}

Under the above setting, we have 
\begin{equation*}
\left(W_2(\rho^0, \rho^1)\right)^2=\inf\{\int_0^1 g(\sigma, \sigma)dt~:~ 
\frac{d\rho}{dt}=\sigma,\quad \rho(0)=\rho^0,\quad \rho(1)=\rho^1, \quad \rho\in\mathcal{C}\},
\end{equation*} 
where $\mathcal{C}$ is the set of all continuously differentiable curves $\rho: [0,1]\rightarrow \mathcal{P}_o(G)$.
So, the metric is well defined, see more details in \cite{chow2012}. 

\subsection{Derivation of the scheme}
Based on the metric manifold $(\mathcal{P}_o(G), W_{2})$, we now derive the 
semi-discretization scheme \eqref{a1} as gradient flow of the discrete free energy. 

In abstract form, the gradient flow is defined by 
\begin{equation*}\label{gfd}
\frac{d\rho}{dt}=-\textrm{grad}_{\mathcal{P}_o(G)} \mathcal{F}(\rho).
\end{equation*}
Here $\textrm{grad}\mathcal{F}(\rho)$ is in the tangent space $T_\rho\mathcal{P}_o(G)$, 
which is defined by the duality condition: 
\begin{equation*}%\label{gfdd}
g(\textrm{grad}_{\mathcal{P}_o(G)}\mathcal{F}(\rho), \sigma)=
\textrm{d}{\mathcal{F}}(\rho)\cdot{ \sigma,}\quad 
\textrm{for any}~ \sigma\in T_\rho\mathcal{P}_o(G),
\end{equation*}
where $\textrm{d}\mathcal{F}\cdot\sigma=
\sum_{i=1}^n\frac{\partial}{\partial \rho_i}\mathcal{F}(\rho)\sigma_i$.
Hence the gradient flow satisfies
\begin{equation}\label{gfdd}
(\frac{d\rho}{dt}, \sigma)_{\rho}+\textrm{d}\mathcal{F}(\rho)\cdot{\sigma}=0,
\quad \textrm{for any}~ \sigma\in T_\rho\mathcal{P}_o(G).
\end{equation}
Following \eqref{gfdd}, we derive \eqref{a1} in Theorem \ref{Derive} below.  
\begin{theorem}\label{Derive}
Given a graph $G$, a constant $\beta> 0$, a vector $(v_i)_{i=1}^n$ and a
symmetric matrix $(w_{ij})_{1\leq i, j\leq n}$.
Then the gradient flow of the discrete free energy 
\begin{equation*}
\mathcal{F}(\rho)=\sum_{i=1}^nv_i\rho_i+\frac{1}{2}\sum_{i=1}^n\sum_{j=1}^n
w_{ij}\rho_i\rho_j+\beta\sum_{i=1}^n\rho_i\log\rho_i,
\end{equation*}
on the metric manifold $(\mathcal{P}_o(G), W_{2})$, is  
\begin{equation*}
\begin{split}
\frac{ d\rho_i}{dt}=\frac{1}{\Delta x^2}\{\sum_{j\in N(i)} \rho_j( F_j(\rho)- F_i(\rho))_+
-\sum_{j\in N(i)}\rho_{i}( F_i(\rho)-F_j(\rho))_+\},
\end{split}
\end{equation*}
for any $i\in V$. Here $F_i(\rho)=\frac{\partial}{\partial\rho_i}\mathcal{F}(\rho)=
v_i+\sum_{j=1}^nw_{ij}\rho_j+\beta\log\rho_i+\beta.$
\end{theorem}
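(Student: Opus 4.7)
The plan is to apply the gradient-flow duality \eqref{gfdd} directly, exploiting the identification $\tau : \mathbb{R}^n/\sim \to T_\rho\mathcal{P}_o(G)$ established in Lemma \ref{l1}. First I would compute the Euclidean partial derivatives of $\mathcal{F}$: using the symmetry $w_{ij}=w_{ji}$, the derivative of the quadratic interaction term doubles up to yield $\sum_j w_{ij}\rho_j$, and combined with the linear potential and entropy terms this produces $F_i(\rho) = v_i + \sum_j w_{ij}\rho_j + \beta\log\rho_i + \beta$ as asserted.

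Second, because $\tau$ is a bijection, I would write $\textrm{grad}_{\mathcal{P}_o(G)}\mathcal{F}(\rho) = \tau([\Phi])$ for an unknown potential class $[\Phi]$, and test \eqref{gfdd} against an arbitrary $\sigma = \tau([\Psi]) \in T_\rho\mathcal{P}_o(G)$. The key feature of Definition \ref{d9} is the mixed form $g(\sigma^1,\sigma^2) = \sum_i \Phi^1_i \sigma^2_i$, which converts the duality identity into the linear condition
$$\sum_{i=1}^n \Phi_i \sigma_i = \sum_{i=1}^n F_i(\rho)\sigma_i \quad \textrm{for all}~\sigma \in T_\rho\mathcal{P}_o(G).$$
Since $T_\rho\mathcal{P}_o(G)$ is precisely the hyperplane $\{\sum_i \sigma_i = 0\}$, this forces $\Phi_i - F_i(\rho)$ to be constant in $i$, so $[\Phi] = [F(\rho)]$ in $\mathbb{R}^n/\sim$.

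Finally, substituting back into the explicit formula for $\tau$, the gradient flow $\frac{d\rho}{dt} = -\textrm{grad}\,\mathcal{F} = -\tau([F(\rho)])$ reads componentwise
$$\frac{d\rho_i}{dt} = -\frac{1}{\Delta x^2}\sum_{j \in N(i)} (F_i(\rho) - F_j(\rho))\, g_{ij}(\rho),$$
and I would close with a short case analysis on the sign of $F_i(\rho) - F_j(\rho)$ using the three branches of \eqref{gij}. In each case one checks directly that $-(F_i - F_j)g_{ij}(\rho) = \rho_j(F_j - F_i)_+ - \rho_i(F_i - F_j)_+$, and summing over $j \in N(i)$ reproduces \eqref{a1} exactly. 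The only point requiring care is bookkeeping: one must track which potential class corresponds to which tangent vector in the mixed inner product so that the constant-shift ambiguity in $\mathbb{R}^n/\sim$ is properly absorbed when pinning $[\Phi]$ down to $[F(\rho)]$; the case analysis itself is a direct verification with no analytic subtlety.
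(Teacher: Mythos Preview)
Your proposal is correct and follows essentially the same duality argument as the paper, but with a cleaner organization. The paper applies the mixed form $g(\sigma^1,\sigma^2)=\sum_i\sigma^1_i\Phi^2_i$ with $\sigma^1=\frac{d\rho}{dt}$ and $\Phi^2$ the potential of the test vector $\sigma$, which forces it to rewrite $d\mathcal{F}\cdot\sigma=\sum_iF_i\sigma_i$ in terms of $\Phi$ via the relabeling computation \eqref{new1} before reading off the coefficients of $\Phi_i$. You instead use the other mixed form $g(\sigma^1,\sigma^2)=\sum_i\Phi^1_i\sigma^2_i$, immediately matching the potential of $\textrm{grad}\,\mathcal{F}$ against $F(\rho)$ and bypassing that relabeling step entirely; the identification $[\Phi]=[F(\rho)]$ then drops out from orthogonality to the hyperplane $\{\sum_i\sigma_i=0\}$. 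Both routes land on $\frac{d\rho_i}{dt}=-\frac{1}{\Delta x^2}\sum_{j\in N(i)}(F_i-F_j)g_{ij}(\rho)$ and finish with the same case split on \eqref{gij}, so the difference is purely one of bookkeeping efficiency rather than substance.
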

\begin{proof}[Proof of Theorem \ref{Derive}]
We show the derivation of \eqref{a1}. For any $\sigma\in T_\rho\mathcal{P}_o(G)$, 
there exists $[\Phi]\in \mathbb{R}^n/\sim$, such that 
$\tau([\Phi])=\sigma$. On one hand, we denote $\frac{d\rho}{dt}=(\frac{d\rho_i}{dt})_{i=1}^n$. 
From definition \ref{d9}, 
\begin{equation}\label{d1}
\begin{split}
(\frac{d\rho}{dt}, \sigma)_\rho=&\sum_{i=1}^n\frac{d\rho_i}{dt}\Phi_i \ .
\end{split}
\end{equation}
At the same time, we also have
\begin{equation}\label{new1}
\begin{split}
\textrm{d}\mathcal{F}(\rho)\cdot{\sigma}
=&\sum_{i=1}^n\frac{\partial}{\partial \rho_i}\mathcal{F}(\rho)\cdot 
\sigma_i=\sum_{i=1}^nF_i(\rho)\frac{1}{\Delta x^2} \sum_{j\in N(i)}g_{ij}(\rho)(\Phi_i-\Phi_j)\\
=&\frac{1}{\Delta x^2} \{\sum_{i=1}^n\sum_{j\in N(i)}g_{ij}(\rho)F_i(\rho)\Phi_i- 
\sum_{i=1}^n\sum_{j\in N(i)}g_{ij}(\rho)F_i(\rho)\Phi_j\}\\
&\textrm{Relabel $i$ and $j$ on second formula}\\
=&\frac{1}{\Delta x^2}\{\sum_{(i,j)\in E}g_{ij}(\rho)F_i(\rho)\Phi_i- 
\sum_{(i,j )\in E}g_{ji}(\rho)F_j(\rho)\Phi_i\}\\
=&\frac{1}{\Delta x^2}\{\sum_{i=1}^{n}\sum_{j\in N(i)} g_{ij}(\rho)\big(F_i(\rho)-F_j(\rho)\big)\Phi_i\}. \\
\end{split}
\end{equation}
Combining \eqref{d1} and \eqref{new1} into \eqref{gfdd}, we have  
\begin{equation*}
\begin{split}
0=&(\frac{d\rho}{dt}, \sigma)_{\rho}+\textrm{d}\mathcal{F}(\rho)\cdot{\sigma} \\
=&\sum_{i=1}^n \{\frac{d\rho_i}{dt}+\frac{1}{\Delta x^2}\sum_{j\in N(i)} 
g_{ij}(\rho)\big(F_i(\rho)-F_j(\rho)\big)\}\Phi_i.
\end{split}
\end{equation*}
Since the above formula is true for all $(\Phi_i)_{i=1}^n\in \mathbb{R}^n$,  then
\begin{equation*}
\frac{d\rho_i}{dt}+\frac{1}{\Delta x^2}\sum_{j\in N(i)} g_{ij}(\rho)\big(F_i(\rho)-F_j(\rho)\big)=0
\end{equation*}
holds for all $i\in V$.
From the definition of $g_{ij}(\rho)$ in \eqref{gij}, we have \eqref{a1}.
\end{proof}

To summarize, we have introduced a new discretization, which can be formally represented as  
\begin{equation*}
\frac{d\rho}{dt}=\textrm{div}_G(\rho\nabla_G F(\rho)),\quad F(\rho)=
(\frac{\partial}{\partial \rho_i}\mathcal{F}(\rho))_{i=1}^n,
\end{equation*} 
where
\begin{equation*}
\textrm{div}_G (\rho\nabla_G F(\rho))=\frac{1}{\Delta x^2}\big(\sum_{j\in N(i)} 
\rho_j( F_j(\rho)- F_i(\rho))_+-\sum_{j\in N(i)}\rho_{i}( F_i(\rho)-F_j(\rho))_+\big)_{i=1}^n.
\end{equation*}

\subsection{Gradient flow properties}
Here, we show that \eqref{a1} is a well defined ODE with gradient flow structures. 
\begin{theorem}\label{existence}
For any initial condition $\rho^0\in\mathcal{P}_o(G)$, \eqref{a1} has a unique solution 
$\rho(t): [0,\infty)\rightarrow \mathcal{P}_o(G)$. Moreover,
\begin{itemize}
\item[(i)]  there exists a constant $c=c(\rho^0)>0$ depending on $\rho^0$, such that 
$\rho_i(t)\geq c$ for all $i\in V$ and $t>0$;
\item[(ii)] the free energy $\mathcal{ F}(\rho)$ is a Lyapunov function of \eqref{a1}:
\begin{equation*}
\frac{d}{dt}\mathcal{F}(\rho(t))=-\sum_{ (i,j)\in E}(\frac{ F_i(\rho)- F_j(\rho)}{\Delta x})_+^2\rho_i.
\end{equation*}
 Further, if $\lim_{t\rightarrow \infty}\rho(t)$ exists, call it 
 $\rho^{\infty}$, then $\rho^{\infty}$ is a Gibbs measure. %\end{itemize}
\end{itemize}
\end{theorem}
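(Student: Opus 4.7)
The plan is to address the statement in the order given: first local existence and uniqueness for \eqref{a1}, then the pointwise lower bound in (i), then the Lyapunov identity, and finally the identification of any limit point as a discrete Gibbs measure. Local existence is standard: on any compact subset of $\mathcal{P}_o(G)$ the field $F_i(\rho)=v_i+\sum_j w_{ij}\rho_j+\beta(\log\rho_i+1)$ is smooth and $\rho\mapsto(F_i-F_j)_+$ is Lipschitz, so the right-hand side of \eqref{a1} is locally Lipschitz and Picard--Lindel\"of delivers a unique local solution. Summing \eqref{a1} over $i$ and relabeling $(i,j)\leftrightarrow(j,i)$ exactly as in the proof of Lemma~\ref{l1} gives $\sum_i\dot\rho_i(t)\equiv 0$, so the total mass is conserved.

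For (i), the key observation is a one-sided Gr\"onwall estimate at the instantaneous minimum vertex $i^*$, where $m(t):=\rho_{i^*}(t)=\min_i\rho_i(t)$. Since $\rho_j\geq\rho_{i^*}$ for every $j\in N(i^*)$, we have $\beta\log(\rho_j/\rho_{i^*})\geq 0$, hence
\[F_j(\rho)-F_{i^*}(\rho)\geq(v_j-v_{i^*})+\sum_k(w_{jk}-w_{i^*k})\rho_k\geq -C,\qquad C:=2\|v\|_\infty+2\|w\|_\infty,\]
so $(F_{i^*}-F_j)_+\leq C$. Discarding the nonnegative inflow term in \eqref{a1} yields $\dot\rho_{i^*}\geq-(d_{\max}C/\Delta x^2)\rho_{i^*}$, where $d_{\max}$ is the maximum degree. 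Applying Gr\"onwall to the Lipschitz function $m(t)$ (whose a.e.\ derivative equals $\dot\rho_{i^*(t)}(t)$) yields $m(t)\geq m(0)e^{-Mt}$ with $M=d_{\max}C/\Delta x^2$; in particular the trajectory never exits $\mathcal{P}_o(G)$ and extends to $[0,\infty)$. Promoting this to a constant $c=c(\rho^0)$ uniform in $t$ is the delicate part; my plan is to combine the above with a LaSalle-type argument based on the dissipation identity proved below. All stationary points of \eqref{a1} inside $\mathcal{P}_o(G)$ are Gibbs measures $\rho_i^\infty=K^{-1}\exp(-(v_i+\sum_k w_{ik}\rho_k^\infty)/\beta)>0$, so the interior equilibrium set is compact in $\mathcal{P}_o(G)$. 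Boundary $\omega$-limit points should be ruled out by contradiction: if $\rho_i(t_k)\to 0$ along $t_k\to\infty$, mass conservation and connectedness of $G$ furnish a neighboring $j$ whose $\rho_j$ stays bounded away from $0$, so that $F_j-F_i=\beta\log(\rho_j/\rho_i)+O(1)\to+\infty$ and the dissipation integrand $[(F_j-F_i)_+/\Delta x]^2\rho_j$ becomes unbounded on a time set of positive measure, contradicting $\int_0^\infty(-\dot\mathcal{F})\,dt<\infty$. The main technical obstacle is making this last step quantitative---controlling the measure of the time set on which the integrand is large via the ODE bound on $\dot\rho_i$---because the vector field is singular on $\partial\mathcal{P}(G)$.

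For (ii), the Lyapunov identity comes from differentiating $\mathcal{F}(\rho(t))$ along \eqref{a1} and invoking \eqref{new1} with $\Phi_i=F_i(\rho)$: this gives $\dot{\mathcal{F}}=-\tfrac{1}{2\Delta x^2}\sum_{(i,j)\in E}g_{ij}(\rho)(F_i-F_j)^2$, and the case analysis of \eqref{gij} collapses this to the stated $-\sum_{(i,j)\in E}[(F_i-F_j)_+/\Delta x]^2\rho_i\leq 0$. If $\rho(t)\to\rho^\infty$, part (i) places $\rho^\infty$ in $\mathcal{P}_o(G)$, so the dissipation is continuous at $\rho^\infty$; since its time-integral is finite and the integrand is nonnegative, it must vanish at $\rho^\infty$, forcing $(F_i(\rho^\infty)-F_j(\rho^\infty))_+=0$ on every edge. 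Strict positivity of $\rho_i^\infty$ together with connectedness of $G$ then makes $F_i(\rho^\infty)$ independent of $i$; solving $v_i+\sum_k w_{ik}\rho_k^\infty+\beta\log\rho_i^\infty+\beta=\mathrm{const}$ under $\sum_i\rho_i^\infty=1$ yields exactly the discrete Gibbs measure stated in the theorem.
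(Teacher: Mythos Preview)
Your treatment of (ii) is essentially the paper's: the Lyapunov identity is obtained by the same index-relabeling computation, and your identification of $\rho^\infty$ as a Gibbs measure via ``finite dissipation integral $+$ continuity of the dissipation at $\rho^\infty$'' is a harmless variant of the paper's argument (the paper instead notes that $\rho(t)\to\rho^\infty$ with the right-hand side of \eqref{a1} continuous on $\mathcal{P}_o(G)$ forces $\dot\rho(t)\to 0$, hence the dissipation vanishes at $\rho^\infty$). Both routes use (i) to place $\rho^\infty$ in the interior.

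For (i) the paper does not give a self-contained argument; it simply cites \cite{chow2012,li_thesis}. Your Gr\"onwall bound $m(t)\ge m(0)e^{-Mt}$ is correct and already yields global existence in $\mathcal{P}_o(G)$, but, as you note, it does not give the uniform constant $c(\rho^0)$. In your proposed upgrade there is a genuine gap beyond the one you flag. The assertion ``mass conservation and connectedness furnish a \emph{neighboring} $j$ whose $\rho_j$ stays bounded away from $0$'' is not justified: all neighbors of $i$ may degenerate along the same subsequence. What one can get is this: pass to a subsequential limit $\rho^*\in\partial\mathcal{P}(G)$ of $\rho(t_k)$; then the zero set $\{l:\rho_l^*=0\}$ is nonempty and proper, so connectedness yields an edge $(i',j')$ with $\rho_{i'}^*=0$, $\rho_{j'}^*>0$, and along $t_k$ the dissipation term $\rho_{j'}[(F_{j'}-F_{i'})_+/\Delta x]^2$ blows up. But this only shows the dissipation is unbounded along a \emph{sequence}, which does not by itself contradict $\int_0^\infty D(\rho(t))\,dt<\infty$; one would still need a lower bound on the Lebesgue measure of the times spent in a neighborhood of $\rho^*$, and the vector field is unbounded there, so the trajectory could in principle transit very quickly. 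This is precisely the quantitative step you identify as the obstacle, and your outline does not close it. The arguments in \cite{chow2012,li_thesis} proceed differently, working directly with the ODE to show that once $\rho_i$ drops below an explicit threshold (depending only on $v,w,\beta,G$) the net flux at $i$ is nonnegative, which yields the uniform bound without invoking the dissipation integral.
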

\begin{proof}
The proof of (i) can be found in \cite{li_thesis}, which is just 
a slight modification of \cite{chow2012}. 
Below, we only show (ii), which justifies saying that \eqref{a1} is a gradient system.
Firstly, we show that $\mathcal{F}(\rho)$ is a Lyapunov function: \begin{equation*}
\begin{split}
\frac{d}{dt}\mathcal{F}(\rho(t))=&\sum_{i=1}^nF_i(\rho)\cdot \frac{d\rho_i}{dt}\\
=&\frac{1}{\Delta x^2}\{\sum_{i=1}^n\sum_{j\in N(i)}F_i(\rho)(F_j(\rho)-F_i(\rho))_+
\rho_j-\sum_{i=1}^n\sum_{j\in N(i)}F_i(\rho)(F_i(\rho)-F_j(\rho))_+\rho_i\} \\
&\textrm{Switch $i$, $j$ on the first formula}\\
=&\frac{1}{\Delta x^2}\{\sum_{i=1}^n\sum_{j\in N(i)}F_j(\rho)(F_i(\rho)-F_j(\rho))_+
\rho_i-\sum_{i=1}^n\sum_{j\in N(i)}F_i(\rho)(F_i(\rho)-F_j(\rho))_+\rho_i \}\\
=&-\sum_{(i,j)\in E}(\frac{F_i(\rho)-F_j(\rho)}{\Delta x})_+^2\rho_i\leq 0.
\end{split}
\end{equation*}

Secondly, we prove that if $\rho^{\infty}=\lim_{t\rightarrow \infty}\rho(t)$ exists, 
then $\rho^\infty$ is a Gibbs measure. Since $\lim_{t\rightarrow \infty}\frac{d\rho(t)}{dt}=0$, then $\lim_{t\rightarrow \infty}\frac{d}{dt}\mathcal{F}(\rho(t))=0$. 
From (i), we know that $\rho^{\infty}_i\geq c(\rho^0)>0$ for any $i\in V$;
so, the relation
\begin{equation*}
\sum_{i=1}^n\sum_{j\in N(i)}(F_i(\rho^{\infty})-F_j(\rho^{\infty}))_+^2\rho_i^{\infty}=0
\end{equation*} 
implies $F_i(\rho^{\infty})=F_j(\rho^{\infty})$ for any $(i,j)\in E$. 
Since the graph is strongly connected, 
\begin{equation*}
F_i(\rho^{\infty})=F_j(\rho^{\infty}),\quad \textrm{for any $i, j\in V$}.
\end{equation*}
Let 
\begin{equation*}
C:= v_i+\sum_{j=1}^nw_{ij}\rho_j^{\infty}+\beta\log\rho_i^{\infty} ,\quad 
{\textrm{which is constant for any $i\in V$,}}
\end{equation*}
$K=e^{-\frac{C}{\beta}}$ and use the fact $\sum_{i=1}^n\rho_i^{\infty}=1$. Then, we have
\begin{equation*}
\rho_i^{\infty}=\frac{1}{K}e^{-\frac{v_i+\sum_{j=1}^nw_{ij}\rho_j^{\infty}}{\beta}}, 
\quad K=\sum_{j=1}^ne^{-\frac{v_i+\sum_{j=1}^nw_{ij}\rho_j^{\infty}}{\beta}}.
\end{equation*}
Hence $\rho^{\infty}$ is a Gibbs measure, which finishes the proof.
\end{proof}
 % \begin{remark}
Notice that $(\mathcal{P}_o(G), W_2)$ is not a smooth Riemannian manifold, 
since for fixed $i$ and $j\in V$, $g_{ij}(\rho)$ may be discontinuous with respect to $\rho$.
%More precisely, the set $\mathcal{P}_o(G)$ is divided into finitely many, at most $\frac{n(n-1)}{2}$, 
%smooth components.  Each one's boundary is a sub-manifold} {\bf TO DO} Clarify this {\bf END DO}
%\begin{equation*}
%\{\rho\in \mathcal{P}_o(G)\mid F_i(\rho)=F_j(\rho),~(i,j)\in E\}.
%\end{equation*}
%Since the manifold $(\mathcal{P}_o(G), W_2)$ is not a smooth Remannian manifold, 
%\eqref{a1} can be viewed as a ``generalized '' gradient flow, which means that \eqref{a1} 
%satisfies the definition of gradient flow on each component of smooth Riemannian manifolds.
Still, even though $(\mathcal{P}_o(G), W_2)$ is not smooth, \eqref{a1} is a well defined ODE 
for any initial condition $\rho^0\in \mathcal{P}_o(G)$.

One may be surprised by the unusual discretization of the Laplacian term, namely
\begin{equation}\label{LG}
\frac{1}{\Delta x^2}(\log\rho_j-\log\rho_i)g_{ij}(\rho)
\end{equation}
which is different from the commonly adopted centered difference. We call \eqref{LG} the ``Log-Laplacian.''
We observe that the Log-Laplacian plays a crucial role in the spatial discretization. 
Not only it implies that \eqref{a1}'s equilibria are Gibbs measures,   
but it also indicates that
the boundary of the probability set,
$$\partial \mathcal{P}(G)=\{(\rho_i)_{i=1}^n\mid \sum_{i=1}^n\rho_i=1,\quad 
\textrm{there exists some index $i$, such that $\rho_i=0$}\}\ ,$$ 
is a repeller for \eqref{a1}. %which coincides with what is known in continuous states. 
We will see that this boundary repeller property plays an important role in
the convergence result of section \ref{convergence}. 

\section{Dissipation rate to a discrete Gibbs measure}\label{convergence}
Considering the gradient flow \eqref{a1}, an important question arises. 
Assuming that $\rho(t)$ converges to an equilibrium $\rho^{\infty}$, how fast is the convergence speed?
In the sequel, we show that the rate of convergence is exponential. 
Indeed, we capture such rate by the following explicit formula. 
\begin{definition}\label{def}
Denote 
\begin{equation*}
f_{ij}=\frac{\partial^2}{\partial \rho_i\partial \rho_j}\mathcal{F}(\rho),
\end{equation*}
and \begin{equation*}
h_{ij, kl}=\frac{f_{ik}+f_{jl}-f_{il}-f_{jk}}{\Delta x^2} \quad \textrm{for any $i$, $j$, $k$, $l\in V$}.
\end{equation*}
We define  
\begin{equation*}
\lambda_{\mathcal{F}}(\rho)=\min_{(\Phi_i)_{i=1}^n\in D} {\sum_{(i,j)\in E}
\sum_{(k,l)\in E}h_{ij, kl}(\frac{\Phi_i-\Phi_j}{\Delta x})_+\rho_i(\frac{\Phi_k-\Phi_l}{\Delta x})_+\rho_k},
\end{equation*}
where 
\begin{equation*}
D=\{(\Phi_i)_{i=1}^n\in\mathbb{R}^n\mid \sum_{(i,j)\in E}(\frac{\Phi_i-\Phi_j}{\Delta x})^2_+\rho_i=1\}.
\end{equation*}
\end{definition}
\begin{remark}
$\lambda_{\mathcal{F}}$ in Definition \ref{def} plays the role of the
smallest eigenvalue of the Hessian operator on Riemannian manifold of the
free energy at Gibbs measure; see \cite{li2017, li_thesis} for more details about this connection. 
\end{remark}
 
Based on $\lambda_\mathcal{F}(\rho)$, we show the exponential convergence result for \eqref{a1}. 
We will assume that $\rho^0$ is in the basin of attraction of $\rho^{\infty}$ for the gradient flow. 
I.e., if $\rho(t)$ is a solution of \eqref{a1} with initial condition $\rho^0$, then
\begin{equation*}
(A)\quad \lim_{t\rightarrow \infty}\rho(t)=\rho^{\infty}\quad \textrm{and} \quad 
\textrm{$\rho^{\infty}$ is an isolated equilibrium}.
\end{equation*}
\begin{theorem}\label{th12}
Let (A) hold, and let
$\lambda_{\mathcal{F}}(\rho^{\infty})>0$. Then there exists a constant 
$C=C(\rho^0,G)>0$, depending on $\rho^0$ and $G$, such that 
\begin{equation*}
\mathcal{F}(\rho(t))-\mathcal{F}(\rho^{\infty})\leq e^{-Ct}(\mathcal{F}(\rho^0)-\mathcal{F}(\rho^{\infty})).
\end{equation*}
Moreover, the asymptotic convergence rate is 
$2\lambda_{\mathcal{F}}(\rho^{\infty})$. I.e., for any sufficiently small 
$\epsilon>0$, there exists a time $T>0$ depending on $\epsilon$ and $\rho^0$, such that when $t>T$,
\begin{equation*}
\mathcal{F}(\rho(t))-\mathcal{F}(\rho^{\infty})\leq e^{-2(\lambda_{\mathcal{F}}(\rho^{\infty})-\epsilon)t}
(\mathcal{F}(\rho(T))-\mathcal{F}(\rho^{\infty})).
\end{equation*}
\end{theorem}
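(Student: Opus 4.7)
The plan is to establish a Polyak--{\L}ojasiewicz style inequality near the equilibrium,
\begin{equation*}
D(\rho):=\sum_{(i,j)\in E}\Bigl(\tfrac{F_i(\rho)-F_j(\rho)}{\Delta x}\Bigr)_+^{2}\rho_i \;\geq\; 2\bigl(\lambda_{\mathcal F}(\rho^\infty)-\epsilon\bigr)\bigl(\mathcal F(\rho)-\mathcal F(\rho^\infty)\bigr),
\end{equation*}
valid in a neighborhood $U$ of $\rho^\infty$, and couple it to the dissipation identity $\tfrac{d}{dt}\mathcal F(\rho(t))=-D(\rho(t))$ of Theorem~\ref{existence}(ii). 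Writing $\Psi(t):=\mathcal F(\rho(t))-\mathcal F(\rho^\infty)\geq 0$, Gronwall then gives $\Psi(t)\leq e^{-2(\lambda_{\mathcal F}(\rho^\infty)-\epsilon)(t-T_\epsilon)}\Psi(T_\epsilon)$ for $t\geq T_\epsilon$, with $T_\epsilon=T_\epsilon(\rho^0)$ the finite entry time of $\rho(t)$ into $U$ guaranteed by assumption~(A); since $\epsilon>0$ is arbitrary, this yields the asymptotic rate $2\lambda_{\mathcal F}(\rho^\infty)$. For the global constant $C=C(\rho^0,G)$ I would use that $t\mapsto-t^{-1}\log(\Psi(t)/\Psi(0))$ extends continuously to $[0,\infty]$ with positive values $D(\rho^0)/\Psi(0)$ at $t=0$ and $2\lambda_{\mathcal F}(\rho^\infty)$ at $t=\infty$ (after discarding the trivial case $\rho^0=\rho^\infty$); its strictly positive infimum provides the required $C$.

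The PL inequality itself comes from computing $\Psi''(t)=-\tfrac{d}{dt}D(\rho(t))$ along the flow. Differentiating each $(F_i-F_j)_+^{2}\rho_i$, substituting $\dot F_i=\sum_k f_{ik}\dot\rho_k$ and $\dot\rho_k=\Delta x^{-2}\sum_{l\in N(k)}g_{kl}(\rho)(F_l-F_k)$, and then relabeling $(k,l)\leftrightarrow(l,k)$ to symmetrize, should collapse the dominant cross term to
\begin{equation*}
\Psi''(t)\;=\;2\!\!\sum_{(i,j),(k,l)\in E}\!\! h_{ij,kl}\Bigl(\tfrac{F_i-F_j}{\Delta x}\Bigr)_+\rho_i\Bigl(\tfrac{F_k-F_l}{\Delta x}\Bigr)_+\rho_k\;-\;R(\rho),
\end{equation*}
where $R(\rho):=\Delta x^{-2}\sum_{(i,j)\in E}(F_i-F_j)_+^{2}\dot\rho_i$. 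Since by Theorem~\ref{existence}(ii) one has $F_i(\rho^\infty)=F_j(\rho^\infty)$, both $(F_i-F_j)_+^{2}=O(\|\rho-\rho^\infty\|^2)$ and $\dot\rho=O(\|\rho-\rho^\infty\|)$, giving $R(\rho)=O(\|\rho-\rho^\infty\|^3)$. The leading double sum is the quadratic form of Definition~\ref{def} evaluated at $\Phi=F(\rho)$; since $F_i(\rho)-F_j(\rho)=\sum_k(f_{ik}(\rho^\infty)-f_{jk}(\rho^\infty))\xi_k+O(\|\xi\|^2)$ with $\xi:=\rho-\rho^\infty$, the Rayleigh quotient of that sum against $D(\rho)$ is bounded below by $\lambda_{\mathcal F}(\rho^\infty)$ up to $O(\|\xi\|)$. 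Combining with $R=o(D)$ produces $\Psi''(t)\geq-2(\lambda_{\mathcal F}(\rho^\infty)-\epsilon)\Psi'(t)$ on $U$, and integrating from $t$ to $\infty$ while using $\Psi(s),\Psi'(s)\to 0$ as $s\to\infty$ (from~(A) and $\Psi'=-D$) recovers the PL inequality above.

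The main obstacle is the combined handling of the $(\cdot)_+$ nonsmoothness and the algebraic cancellation producing $h_{ij,kl}$ in the computation of $\tfrac{d}{dt}D(\rho(t))$. Although $(F_i-F_j)_+^{2}$ is $C^1$ in $\rho$, the weights $g_{ij}(\rho)$ appearing via $\dot\rho$ are only piecewise constant, so the differentiation must be performed on each sign stratum $\{F_i>F_j\}$ versus $\{F_i<F_j\}$ and then glued across the codimension-one crossing loci $\{F_i=F_j\}$, which the trajectory visits on a set of $t$ of measure zero generically. A secondary subtlety is that the Rayleigh quotient argument is applied only along the specific direction $\Phi=F(\rho)$ rather than uniformly in $\Phi$, so the linearization $F_i(\rho)-F_j(\rho)\approx\sum_k(f_{ik}-f_{jk})(\rho^\infty)\xi_k$ must be controlled carefully to ensure the quotient respects the infimum $\lambda_{\mathcal F}(\rho^\infty)$ in the limit.
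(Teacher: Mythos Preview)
Your proposal is correct and matches the paper's approach closely: both compute $\Psi''$ along the flow, isolate the dominant $h_{ij,kl}$-quadratic form (the paper's $\spadesuit$, obtained by the same relabeling you describe), bound the remainder (the paper's $\clubsuit$, your $R$) as $o(\Psi')$, and then compare $\Psi''\ge -2\lambda_{\mathcal F}\Psi'$ and integrate via Gronwall. One simplification: your ``secondary subtlety'' is not needed, since $\lambda_{\mathcal F}(\rho)$ is by definition an infimum over all $\Phi$, so the specific choice $\Phi=F(\rho)$ already yields $\spadesuit\ge 2\lambda_{\mathcal F}(\rho)D(\rho)$ without any linearization of $F$, and only continuity of $\rho\mapsto\lambda_{\mathcal F}(\rho)$ is required to pass to $\lambda_{\mathcal F}(\rho^\infty)-\epsilon$ near the equilibrium.
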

\begin{proof}[Motivation of the proof]
Our proof is motivated by some known facts of gradient flows in 
$\mathbb{R}^n$. We consider a $\lambda$-convex energy $g(x)\in C^2(\mathbb{R}^n)$, i.e. $\textrm{Hess}_{\mathbb{R}^n}g(x) \succeq \lambda I$, $\lambda>0$ for all $x\in \mathbb{R}^n$. The gradient flow associated to $g$ is 
\begin{equation*}
\frac{dx_t}{dt}=-\nabla g(x_t),\quad x_t\in\mathbb{R}^n.
\end{equation*}
We compare the first and second derivative of $g(x_t)$ with respect to $t$:
\begin{equation*}
\begin{split}
\frac{d}{dt}g(x_t)=&-(\nabla g(x_t), \nabla g(x_t)),\\
\frac{d^2}{dt^2}g(x_t)=&-2(\textrm{Hess}_{\mathbb{R}^n}g(x_t)\cdot \nabla g(x_t), 
\nabla g(x_t))\geq -2\lambda \frac{d}{dt}g(x_t).
\end{split}
\end{equation*}
From the above comparison, we obtain the convergence result.
Integrating on the time interval $[t, +\infty)$,
\begin{equation*}
\frac{d}{dt}[g(x_t)-g(x_\infty)]\leq -2\lambda [g(x_t)-g(x_\infty)],
\end{equation*}
and applying Gronwall's inequality, the energy function $g(x_t)$ decreases exponentially 
\begin{equation*}
g(x_t)-g(x_{\infty})\leq e^{-2\lambda t}(g(x_0)-g(x_\infty)).
\end{equation*}
In addition, from the dynamical viewpoint, the strict convexity of the
free energy can be weakened:
if the equilibrium $x_{\infty}$ is a strict local minimizer, the exponential convergence result 
is still valid. Furthermore, 
the asymptotic convergence rate is $2\lambda_{\min}\textrm{Hess}_{\mathbb{R}^n}g(x_\infty)$.
%\footnote{The smallest eigenvalue of the Hessian matrix of the energy at the minimizer.}.
\end{proof}
\begin{proof}[Proof of Theorem \ref{th12}]
Motivated by the standard approach in $\mathbb{R}^n$, we briefly sketch our proof in Riemannian manifold $(\mathcal{P}_o(G), W_2)$; see \cite{li2017, li_thesis} for complete details. The main idea is to compare the first and second derivatives of the free energy along \eqref{a1}. 

\textbf{Claim}: 
\begin{equation}\label{argument}
\begin{split}
\frac{d^2}{dt^2}\mathcal{F}(\rho(t))=&\frac{2}{\Delta x^4}\sum_{(i,j)\in E}
\sum_{(k,l)\in E}h_{ij, kl}(F_i-F_j)_+\rho_i(F_k-F_l)_+\rho_k\\
&+o(\frac{d}{dt}\mathcal{F}(\rho(t))).
\end{split}
\end{equation}
Here we denote $\lim_{h\rightarrow 0}\frac{o(h)}{h}=0$, 
$F_i=\frac{\partial}{\partial \rho_i}\mathcal{F}(\rho)$, 
$f_{ij}=\frac{\partial^2 }{\partial \rho_i\partial \rho_j} \mathcal{F}(\rho)$ and 
$h_{ij, kl}=f_{ik}+f_{lj}-f_{il}-f_{jk}$.
If \eqref{argument} holds, it is not hard to check that Theorem \ref{th12} holds.

Let's show \eqref{argument} directly.
Recall the gradient flow \eqref{a1}
\begin{equation*}
\frac{d\rho_i}{dt}=\frac{1}{\Delta x^2}\{\sum_{j\in N(i)}(F_j-F_i)_+\rho_j-\sum_{j\in N(i)}(F_i-F_j)_+\rho_i\}.
\end{equation*} 
We compute the first derivative of the free energy along \eqref{a1}:
\begin{equation*}
\begin{split}
\frac{d}{dt}\mathcal{F}(\rho(t))=-\sum_{i=1}^n\sum_{j\in N(i)}(\frac{F_i-F_j}{\Delta x})_+^2\rho_i.
\end{split}
\end{equation*}
Notice that $\frac{d^2}{dt^2}\mathcal{F}(\rho(t))$
exists for all $t\geq 0$, because $(F_i(\rho)-F_j(\rho))_+^2$ is differentiable everywhere with respect to $\rho$.
Then we obtain the second derivative by using the product rule:
\begin{equation*}
\begin{split}
\frac{d^2}{dt^2}\mathcal{F}(\rho(t))=& \quad-\sum_{i=1}^n\sum_{j\in N(i)}(\frac{F_i-F_j}{\Delta x})_+^2
\frac{d\rho_i}{dt} \hspace{2.2cm} \clubsuit \\
&-2\frac{1}{\Delta x^2}\sum_{i=1}^n\sum_{j\in N(i)}(\frac{d F_i}{dt}-\frac{dF_j}{dt})(F_i-F_j)_+\rho_i.
\quad \spadesuit\\
\end{split}
\end{equation*}

Hence, \eqref{argument} can be shown by the following two steps. 
Firstly,  since $\rho(t)$ is assumed to converge to an equilibrium $\rho^{\infty}$ and 
the boundary is a repeller (Theorem \ref{existence}),
we know that $\frac{d\rho}{dt}\rightarrow 0$ while $\rho_i(t)\geq c(\rho^0)>0$. 
Hence $\clubsuit$ is a high order term of the first derivative
\begin{equation*}
\clubsuit=o(\frac{d}{dt}\mathcal{F}(\rho(t))).
\end{equation*}
Secondly, we have the following Lemma. 
 \begin{lemma}\label{spade}
\begin{equation*}
\spadesuit=2\sum_{(i,j)\in E}\sum_{(k,l)\in E}h_{ij,kl}(\frac{F_i-F_j}{\Delta x})_+
\rho_i(\frac{F_k-F_l}{\Delta x})_+\rho_k. 
\end{equation*}
\end{lemma}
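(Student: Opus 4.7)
The plan is to unfold $\spadesuit$ by expressing $dF_i/dt$ via the chain rule, substituting the semi-discretization \eqref{a1} for $d\rho_k/dt$, and then reorganizing the resulting quadruple sum over edges through an index relabeling that produces exactly the symmetric combination $h_{ij,kl}$.

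Concretely, the first step is to use $F_i=\frac{\partial}{\partial\rho_i}\mathcal{F}$ and the $C^2$ regularity of $\mathcal{F}$ to write
\begin{equation*}
\frac{dF_i}{dt}-\frac{dF_j}{dt}=\sum_{k=1}^n (f_{ik}-f_{jk})\,\frac{d\rho_k}{dt}.
\end{equation*}
Substituting \eqref{a1} and re-expressing $\sum_{k=1}^n\sum_{l\in N(k)}$ as a single sum over directed edges $(k,l)\in E$ gives
\begin{equation*}
\frac{dF_i}{dt}-\frac{dF_j}{dt}=\frac{1}{\Delta x^2}\sum_{(k,l)\in E}(f_{ik}-f_{jk})\bigl[(F_l-F_k)_+\rho_l-(F_k-F_l)_+\rho_k\bigr].
\end{equation*}

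Next, in the first bracketed term I would swap the dummy indices $k\leftrightarrow l$ (legitimate because the sum runs over all directed edges); this converts $(f_{ik}-f_{jk})(F_l-F_k)_+\rho_l$ into $(f_{il}-f_{jl})(F_k-F_l)_+\rho_k$. The two contributions then combine into a single coefficient $(f_{il}-f_{jl})-(f_{ik}-f_{jk})=-(f_{ik}+f_{jl}-f_{il}-f_{jk})=-\Delta x^2\,h_{ij,kl}$ in front of $(F_k-F_l)_+\rho_k$. Plugging this identity into
\begin{equation*}
\spadesuit=-\frac{2}{\Delta x^2}\sum_{(i,j)\in E}\Bigl(\frac{dF_i}{dt}-\frac{dF_j}{dt}\Bigr)(F_i-F_j)_+\rho_i,
\end{equation*}
the two minus signs cancel, the $\Delta x^{-4}$ distributes as two factors of $\Delta x^{-1}$ onto the two $(\cdot)_+$ pieces, and the claimed formula drops out.

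The computation contains no analytic subtlety. The one delicate point is that $(F_i-F_j)_+$ is only Lipschitz in $\rho$, but the product rule that produced $\spadesuit$ from $\frac{d}{dt}[(F_i-F_j)_+^2\rho_i]$ is legitimate because on $\{F_i>F_j\}$ one has $(F_i-F_j)_+=F_i-F_j$ with the obvious time derivative, while on $\{F_i\le F_j\}$ the factor $(F_i-F_j)_+$ kills the product. The only real obstacle is bookkeeping: four summation indices must be tracked so that the antisymmetric combination $f_{ik}+f_{jl}-f_{il}-f_{jk}$ appears with the correct orientation. The relabeling $k\leftrightarrow l$ is precisely the symmetrization device already used in the proofs of Theorem \ref{Derive} and Lemma \ref{l1}, so no new idea is required.
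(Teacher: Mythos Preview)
Your proposal is correct and follows essentially the same route as the paper: chain rule for $dF_i/dt$, substitution of \eqref{a1}, and the relabeling $k\leftrightarrow l$ in the first bracketed term to produce the antisymmetric coefficient $f_{ik}+f_{jl}-f_{il}-f_{jk}$. The only cosmetic difference is that the paper carries out the computation on $-\tfrac{1}{2}\spadesuit$ directly rather than first isolating $\tfrac{dF_i}{dt}-\tfrac{dF_j}{dt}$; the algebra and the key relabeling step are identical.
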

\begin{proof}[Proof of Lemma \ref{spade}]
We derive this result by a direct calculation. Here we use the relabeling technique 
heavily: For a matrix $(k_{ij})_{1\leq i, j\leq n}$,
\begin{equation*}
\sum_{(i,j)\in E}k_{ij}=\sum_{(j,i)\in E}k_{ji}.
\end{equation*}
Then
\begin{equation*}
\begin{split}
-\frac{1}{2}\spadesuit=&\frac{1}{\Delta x^2}\sum_{i=1}^n\sum_{j\in N(i)}(F_i-F_j)_+
\rho_i(\frac{d}{dt}F_i(\rho(t))-\frac{d}{dt} F_j(\rho(t)))\\
=&\frac{1}{\Delta x^2} \sum_{i=1}^n\sum_{j\in N(i)}(F_i-F_j)_+
\rho_i(\sum_{k=1}^n\frac{\partial F_i}{\partial \rho_k}\frac{d\rho_k}{dt}-
\sum_{k=1}^n\frac{\partial F_j}{\partial \rho_k}\frac{d\rho_k}{dt})\\
=&\frac{1}{\Delta x^2}\sum_{i=1}^n\sum_{j\in N(i)}(F_i-F_j)_+
\rho_i\sum_{k=1}^n(f_{ik}-f_{kj})\frac{d\rho_k}{dt}\\
=&\frac{1}{\Delta x^4}\sum_{i=1}^n\sum_{j\in N(i)}(F_i-F_j)_+
\rho_i\sum_{k=1}^n(f_{ik}-f_{kj})[\sum_{l\in N(k)}(F_l-F_k)_+
\rho_l-\sum_{l\in N(k)}(F_k-F_l)_+\rho_k]\\
=&\frac{1}{\Delta x^4}\sum_{i=1}^n\sum_{j\in N(i)}(F_i-F_j)_+
\rho_i\{\sum_{k=1}^n\sum_{l\in N(k)}(f_{ik}-f_{kj})(F_l-F_k)_+\rho_l\\
&\hspace{4cm}-\sum_{k=1}^n\sum_{l\in N(k)}(f_{ik}-f_{kj})(F_k-F_l)_+\rho_k\}\\
&\textrm{ Relabel $k$, $l$ in the first formula}\\
=&\frac{1}{\Delta x^4}\sum_{i=1}^n\sum_{j\in N(i)}(F_i-F_j)_+
\rho_i\{\sum_{k=1}^n\sum_{l\in N(k)}(f_{il}-f_{lj})(F_k-F_l)_+\rho_k\\
&\hspace{4cm}-\sum_{k=1}^n\sum_{l\in N(k)}(f_{ik}-f_{kj})(F_k-F_l)_+\rho_k\}\\
=&\frac{1}{\Delta x^4}\sum_{i=1}^n\sum_{j\in N(i)}\sum_{k=1}^n\sum_{l\in N(k)}
(f_{il}-f_{lj}-f_{ik}+f_{kj})(F_i-F_j)_+\rho_i(F_k-F_l)_+\rho_k\\
=&\frac{1}{\Delta x^4}\sum_{(i,j)\in E}\sum_{(k,l)\in E}
(f_{il}-f_{lj}-f_{ik}+f_{kj})(F_i-F_j)_+\rho_i(F_k-F_l)_+\rho_k.
\end{split}
\end{equation*}
\end{proof}
Combining all the above facts, the claim and the proof of Theorem \ref{th12} follow.
\end{proof}

\subsection{Analysis of dissipation rate}
In the sequel, we further elucidate the relationship between convexity
of the free energy (Hessian operator in $\mathbb{R}^n$) and the dissipation rate.

\begin{lemma}\label{rate}
Denote
\begin{equation*}
\tilde{\textrm{div}}_G(\rho\nabla_G\Phi):=\big(\frac{1}{\Delta x^2}\{\sum_{j\in N(i)}(\Phi_i-\Phi_j)_+
\rho_i-\sum_{j\in N(i)}(\Phi_j-\Phi_i)_+\rho_j\}\big)_{i=1}^n.
\end{equation*}
Then $\lambda_{\mathcal{F}}(\rho)$ in Definition \ref{def} is equivalent to 
\begin{equation*}
\lambda_{\mathcal{F}}(\rho)=\min \{\big(\tilde{\textrm{div}}_G(\rho\nabla_G\Phi)\big)^T 
\textrm{Hess}_{\mathbb{R}^n}\mathcal{F}(\rho) \tilde{\textrm{div}}_G(\rho\nabla_G\Phi)
~:~ \sum_{(i,j)\in E}(\frac{\Phi_i-\Phi_j}{\Delta x})^2_+\rho_i=1\}.
\end{equation*}
\end{lemma}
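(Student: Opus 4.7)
The plan is to show that the two expressions for $\lambda_{\mathcal{F}}(\rho)$ are algebraically identical on matching feasible sets. Since the constraint $\sum_{(i,j)\in E}\bigl(\frac{\Phi_i-\Phi_j}{\Delta x}\bigr)_+^2\rho_i=1$ is the same in both formulations, the whole task reduces to showing that the quadratic form under the minimum agrees, namely
\begin{equation*}
\bigl(\tilde{\textrm{div}}_G(\rho\nabla_G\Phi)\bigr)^T \textrm{Hess}_{\mathbb{R}^n}\mathcal{F}(\rho)\, \tilde{\textrm{div}}_G(\rho\nabla_G\Phi) = \sum_{(i,j)\in E}\sum_{(k,l)\in E} h_{ij,kl} \Bigl(\frac{\Phi_i-\Phi_j}{\Delta x}\Bigr)_+\rho_i \Bigl(\frac{\Phi_k-\Phi_l}{\Delta x}\Bigr)_+\rho_k
\end{equation*}
for every $\Phi\in\mathbb{R}^n$.

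To prove this, I would let $\sigma_i := \tilde{\textrm{div}}_G(\rho\nabla_G\Phi)_i$ and write out $\sigma^T \textrm{Hess}_{\mathbb{R}^n}\mathcal{F}(\rho) \sigma = \sum_{i,k} f_{ik}\sigma_i\sigma_k$ directly from the definition of $\sigma_i$. After substituting, each $\sigma_i\sigma_k$ yields a product of two brackets of the form $[\sum_{j\in N(i)}(\Phi_i-\Phi_j)_+\rho_i-\sum_{j\in N(i)}(\Phi_j-\Phi_i)_+\rho_j]$, which expands into four quadruple sums indexed by $(i,j),(k,l)$ with $j\in N(i)$ and $l\in N(k)$. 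The key step is to bring all four terms into the common ``positive-positive'' form $(\Phi_i-\Phi_j)_+\rho_i(\Phi_k-\Phi_l)_+\rho_k$ by exploiting the fact that $E$ is symmetric (contains both orientations of each edge). In the three ``mixed'' terms one relabels $i\leftrightarrow j$ on the first edge sum, $k\leftrightarrow l$ on the second edge sum, or both; each swap carries the corresponding second-order derivative index with it, producing the coefficients $-f_{il}$, $-f_{jk}$, and $+f_{jl}$, respectively.

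Collecting the four contributions gives $\sum_{(i,j),(k,l)\in E}(f_{ik}+f_{jl}-f_{il}-f_{jk})(\Phi_i-\Phi_j)_+\rho_i(\Phi_k-\Phi_l)_+\rho_k$ divided by $\Delta x^4$. Recognizing the coefficient as $\Delta x^2\, h_{ij,kl}$ and grouping the remaining $\Delta x^2$ with the two positive parts yields precisely the expression inside the minimum in Definition \ref{def}. The constraint sets coincide verbatim, so taking the minimum on both sides completes the proof.

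The main obstacle is purely bookkeeping: I must be scrupulously careful with the four sign/label combinations in the expansion and verify that each relabeling is legitimate (this rests on $E$ being symmetric and on $j\in N(i) \Leftrightarrow (i,j)\in E$). A useful sanity check along the way is the symmetry $h_{ij,kl}=h_{kl,ij}=h_{ji,lk}$ and the antisymmetry $h_{ij,kl}=-h_{ji,kl}$, which together guarantee that the reorganized sum is well-posed; no other property of $\mathcal{F}$ beyond $f_{ik}=f_{ki}$ is used, so the lemma holds for arbitrary smooth $\mathcal{F}$.
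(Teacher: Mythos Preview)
Your proposal is correct and is precisely the ``direct computation'' the paper alludes to (it defers the details to \cite{li_thesis}); the relabeling of $i\leftrightarrow j$ and $k\leftrightarrow l$ over the symmetric edge set is exactly the technique used in the paper's proof of Lemma~\ref{spade}, and your bookkeeping of the four cross terms yielding $f_{ik}+f_{jl}-f_{il}-f_{jk}=\Delta x^2\,h_{ij,kl}$ is the whole content of the argument.
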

The proof of Lemma \ref{rate} is based on a direct computation, see details in page 42 of \cite{li_thesis}. Lemma \ref{rate} gives convergence rates for many semi-discretization schemes.
\begin{corollary}\label{corollary4}
Consider the gradient flow \eqref{a1} of the free energy 
\begin{equation*}
\mathcal{F}(\rho)=\sum_{i=1}^n v_i\rho_i+\frac{1}{2}\sum_{i=1}^n\sum_{j=1}^n
w_{ij}\rho_i\rho_j+\beta\sum_{i=1}^n\rho_i\log\rho_i.
\end{equation*}
If the matrix $W=(w_{ij})_{1\leq i, j\leq n}$ is semi positive definite, then there
is a unique Gibbs measure
$\rho^{\infty}$, which is a global attractor of \eqref{a1}. 
Moreover, there exists a constant $C>0$, such that 
\begin{equation*}
\mathcal{F}(\rho(t))-\mathcal{F}(\rho^{\infty})\leq 
e^{-Ct}(\mathcal{F}(\rho^0)-\mathcal{F}(\rho^{\infty}))
\end{equation*}
with asymptotic rate $2\lambda_{\mathcal{F}}(\rho^{\infty})$.
\end{corollary}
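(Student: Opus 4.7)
The plan is to split the corollary into three pieces: strict convexity of $\mathcal{F}$ yielding a unique Gibbs measure, global attractivity of $\rho^\infty$, and strict positivity of $\lambda_\mathcal{F}(\rho^\infty)$ that lets me invoke Theorem \ref{th12}.

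First I would compute the Euclidean Hessian
\[
\textrm{Hess}_{\mathbb{R}^n}\mathcal{F}(\rho) = W + \beta\,\textrm{diag}(1/\rho_i),
\]
and note that when $W\succeq 0$ and $\rho\in\mathcal{P}_o(G)$ the diagonal term makes it strictly positive definite, so $\mathcal{F}$ is strictly convex on $\mathcal{P}_o(G)$. Because $F_i(\rho)=v_i+\sum_j w_{ij}\rho_j+\beta\log\rho_i+\beta\to -\infty$ as $\rho_i\to 0^+$, any minimizer of $\mathcal{F}$ on the compact simplex $\mathcal{P}(G)$ must lie in the interior, and strict convexity then forces uniqueness. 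Since by Theorem \ref{existence}(ii) every equilibrium of \eqref{a1} is a Gibbs measure, equivalently a critical point of $\mathcal{F}$ on the simplex, the same strict convexity identifies this unique critical point with the minimizer $\rho^\infty$.

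For global attractivity I would combine Theorem \ref{existence}(i), which keeps the trajectory in a compact subset of $\mathcal{P}_o(G)$, with the Lyapunov decay in Theorem \ref{existence}(ii), and then appeal to LaSalle's invariance principle: the $\omega$-limit set of any trajectory lies in the equilibrium set of \eqref{a1}, which by the previous step is $\{\rho^\infty\}$. Hence $\rho(t)\to\rho^\infty$ for every $\rho^0\in\mathcal{P}_o(G)$, and hypothesis (A) of Theorem \ref{th12} is satisfied.

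Finally, for the rate I would apply Lemma \ref{rate} to show $\lambda_\mathcal{F}(\rho^\infty)>0$. Since $\textrm{Hess}_{\mathbb{R}^n}\mathcal{F}(\rho^\infty)$ is positive definite, it suffices to check that the map $\Phi\mapsto\tilde{\textrm{div}}_G(\rho^\infty\nabla_G\Phi)$ does not vanish on the constraint set of Definition \ref{def}; a discrete maximum principle in the spirit of Lemma \ref{l1}---evaluating the $i$-th component at $i=\arg\max_k\Phi_k$ kills the $(\Phi_j-\Phi_i)_+\rho_j^\infty$ terms and then forces $\Phi$ to be constant on neighbors of $i$, hence constant on $G$ by connectedness, contradicting the normalization---establishes this. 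Theorem \ref{th12} then delivers the exponential decay with asymptotic rate $2\lambda_\mathcal{F}(\rho^\infty)$. The main obstacle I foresee is precisely this last injectivity/maximum-principle step, because $\tilde{\textrm{div}}_G$ is defined through the nondifferentiable $(\cdot)_+$ rather than the weights $g_{ij}$ used in Lemma \ref{l1}; once it is settled, the remainder is an orchestration of earlier results.
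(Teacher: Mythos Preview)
Your proposal is correct and follows the same line as the paper: compute the Euclidean Hessian $W+\beta\,\mathrm{diag}(1/\rho_i)$, note it is positive definite, and invoke Lemma~\ref{rate} together with Theorem~\ref{th12}. The paper's own proof is in fact only a one-line sketch that defers all details to the companion thesis, and the extra steps you supply (uniqueness via strict convexity, global attraction via LaSalle, and the maximum-principle injectivity argument for $\tilde{\mathrm{div}}_G$ that you flagged as the main obstacle) are exactly the details one would need to fill in.
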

\begin{proof}
The main idea of proof is as follows (full details are in \cite{li_thesis}).
Notice that since $\textrm{Hess}_{\mathbb{R}^n}\sum_{i=1}^n\rho_i\log\rho_i=
\textrm{diag}(\frac{1}{\rho_k^{\infty}})_{1\leq k\leq n}$
and the matrix $W$ is semi positive definite, then
\begin{equation*}
\textrm{Hess}_{\mathbb{R}^n}\mathcal{F}(\rho)|_{\rho=\rho^{\infty}}=
W+\beta\textrm{diag}(\frac{1}{\rho_k^{\infty}})_{1\leq k\leq n}
\end{equation*}
is a positive definite matrix. 
Then, from Lemma \ref{rate} and Theorem \ref{th12}, we know that \eqref{a1} converges exponentially. 
\end{proof}

Throughout this section, we observe another important effect of the
Log-Laplacian, which reflects the convexity property of the linear entropy
\begin{equation*}
\mathcal{H}(\rho)=\sum_{i=1}^n\rho_i\log\rho_i.
\end{equation*}
Lemma \ref{rate} says that 
\begin{equation*}
\lambda_{\mathcal{H}}(\rho)=\min\{\sum_{i=1}^n\frac{1}{\rho_i}
(\tilde{\textrm{div}}_G(\rho\nabla_G\Phi)|_i)^2~:~\sum_{(i,j)\in E}
(\frac{\Phi_i-\Phi_j}{\Delta x})^2_+\rho_i=1\}.
\end{equation*}
Given any Gibbs measure $\rho^{\infty}$, we know that $\lambda_{\mathcal{H}}(\rho^{\infty})>0$. 
To visualize that, consider a simple example with no interaction energy, meaning 
that $(w_{ij})=0$. In this case, \eqref{a1} is a semi-discretization for a linear 
Fokker-Planck equation. 
The free energy is 
$$\mathcal{F}(\rho)=\sum_{i=1}^nv_i\rho_i+\beta\mathcal{H}(\rho).$$ 
Here, strict convexity of $\mathcal{H}(\rho)$ tells  
that there always exists a constant $C>0$, such that 
\begin{equation*}
\mathcal{F}(\rho(t))-\mathcal{F}(\rho^{\infty})\leq e^{-Ct}(\mathcal{F}(\rho^0)-\mathcal{F}(\rho^{\infty}))
\end{equation*}
holds with asymptotic rate $2\lambda_{\mathcal{H}}(\rho^{\infty})$. 

\section{Numerical analysis}\label{na}
In this section,  we show some numerical properties of \eqref{a1}.

\subsection{Spatial consistency}
To begin with, we show that \eqref{a1} is a finite volume scheme for the PDE \eqref{PDE}. 
For concreteness, we use a lattice graph.  Rewrite \eqref{a1} in  the following form
\begin{equation*}
\frac{d \rho_i}{dt}=\frac{1}{\Delta x^2}\{\sum_{v=1}^d\sum_{j\in N_v(i)}[F_j(\rho)- F_i(\rho)]_+
\rho_j-\sum_{v=1}^d\sum_{j\in N_v(i)}[F_i(\rho)-F_j(\rho)]_+\rho_i\}.
\end{equation*}
Denote $i=(i_1, \cdots, i_d)$, and $G$ is a cartesian graph of $d$ one dimensional lattices,
i.e. $G=G_1\Box \cdots \Box G_d$ with $G_v=(V_v, E_v)$. Here
\begin{equation*}
N_v(i)=\{(i_1,\cdots, i_{v-1}, j_v, i_{v+1},\cdots, i_d)\in V\mid (i_v,j_v)\in E_v\}.
\end{equation*}
\begin{theorem}\label{semi}
The semi-discretization \eqref{a1} is a consistent finite volume scheme for the PDE \eqref{PDE}.
\end{theorem}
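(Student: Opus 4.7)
The plan is to recognize \eqref{a1} as an upwind finite-volume discretization of the conservation-law form $\partial_t\rho = \nabla\cdot(\rho\nabla\Phi)$ with continuous chemical potential
\[
\Phi(x) := V(x) + \int_{\mathbb{R}^d}W(x,y)\rho(t,y)\,dy + \beta\log\rho(t,x),
\]
whose discrete analog at the lattice node $x_i$ is precisely $F_i(\rho)$. Note that $\nabla\cdot(\rho\nabla\Phi)$ reproduces the right-hand side of \eqref{PDE} since $\nabla\cdot(\rho\,\beta\nabla\log\rho) = \beta\Delta\rho$. The strategy is to fix a smooth $\rho(t,\cdot)$ on the Cartesian lattice $G = G_1\Box\cdots\Box G_d$, substitute the cell masses $\rho_k := \rho(t,x_k)\Delta x^d$, and compare the right-hand side of \eqref{a1} (divided by $\Delta x^d$) with $\nabla\cdot(\rho\nabla\Phi)(x_i)$ to leading order in $\Delta x$.

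First I would establish the local expansion
\[
F_i(\rho) = \Phi(x_i) + C(\Delta x) + O(\Delta x^2),
\]
where $C(\Delta x) = \beta + \beta d\log\Delta x$ is independent of $i$ and the $O(\Delta x^2)$ remainder is the midpoint-rule error of $\sum_j W(x_i,x_j)\rho(t,x_j)\Delta x^d$, which is smooth in $x_i$. Because the additive constant cancels in every difference $F_j - F_i$ and a further finite difference of a smooth $O(\Delta x^2)$ function contributes only $O(\Delta x^3)$, one gets, for each direction $v$ and the neighbors $j^{\pm} := i \pm e_v$,
\[
F_{j^{\pm}} - F_i = \pm\,\Delta x\,\partial_v\Phi(x_i) + \tfrac{\Delta x^2}{2}\,\partial_v^2\Phi(x_i) + O(\Delta x^3).
\]

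Next I would handle the upwind $(\cdot)_+$ switch by splitting on the sign of $\partial_v\Phi(x_i)$. In the regime $\partial_v\Phi(x_i) > 0$ only $(F_{j^+}-F_i)_+$ and $(F_i - F_{j^-})_+$ survive, and substituting the expansion above yields
\[
\tfrac{1}{\Delta x^2}\bigl\{\rho_{j^+}(F_{j^+}-F_i) - \rho_i(F_i - F_{j^-})\bigr\} = \Delta x^d\,\partial_v(\rho\,\partial_v\Phi)(x_i) + O(\Delta x^{d+1}).
\]
The case $\partial_v\Phi(x_i) < 0$ is symmetric (the surviving upwind value swaps $\rho_{j^+}\leftrightarrow\rho_{j^-}$) and produces the same leading term; at a critical point $\partial_v\Phi(x_i)=0$, the two surviving $(\cdot)_+$ contributions are each $O(\Delta x^2)$ but their sum reproduces $\Delta x^d\,\rho(x_i)\,\partial_v^2\Phi(x_i) = \Delta x^d\,\partial_v(\rho\,\partial_v\Phi)(x_i)$ modulo $O(\Delta x^{d+1})$, matching the other two regimes. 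Summing over $v=1,\ldots,d$, dividing by $\Delta x^d$, and using $\tfrac{d\rho_i}{dt} = \partial_t\rho(t,x_i)\Delta x^d$ delivers $\partial_t\rho(x_i) = \nabla\cdot(\rho\nabla\Phi)(x_i) + O(\Delta x)$, i.e.\ \eqref{PDE} with a first-order truncation error.

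The main obstacle is precisely the upwind switch: because the $(\cdot)_+$ makes the scheme piecewise defined in $\rho$, there is no single Taylor expansion, and the bookkeeping must branch on the sign of $\partial_v\Phi(x_i)$ and verify that all three regimes produce a common leading-order flux divergence. A secondary point is checking that the midpoint-rule error in the $W$-convolution is smooth enough in $x_i$ that one further discrete difference gains an extra factor of $\Delta x$ and does not corrupt first-order consistency; this is routine given the smoothness assumptions on $W$ and $\rho$, but must be recorded explicitly since an $O(\Delta x^2)$ error inside each $F_i$ would otherwise swamp the leading flux.
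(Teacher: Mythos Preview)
Your proposal is correct and follows essentially the same route as the paper: Taylor-expand the potential along each coordinate direction, case-split on the sign that controls the upwind switch, and verify that every branch yields the same leading-order flux divergence. The paper frames $\rho_i$ as a cell integral rather than a point value times $\Delta x^d$, but for first-order consistency these are interchangeable. If anything, you are more explicit than the paper about two points it glosses over: the additive constant $\beta+\beta d\log\Delta x$ hidden in $F_i$ (which the paper's identification of $F_i$ with the continuous $F(x(i),\rho)$ silently absorbs), and the need for the midpoint-rule error in the $W$-convolution to be smooth in $x_i$ so that one more difference gains an extra $\Delta x$.
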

\begin{proof} 
Denote by $\rho_i(t)$ a discrete probability function 
\begin{equation*}
\rho_i(t)=\int_{C_i}\rho(t,x)dx,
\end{equation*}
where $C_i$ is a cube in $\mathbb{R}^d$ centered at point $i$ with equal 
width $\Delta x$. Here $i\in V$ represents a point $x(i)\in \mathbb{R}^d$. 
Let $e_v=(0,\cdots,1,\cdots, 0)^T$, where $1$ is in the $v$-th position, 
$v=1, \cdots, d$. So in this setting, $N_v(i)$ for a lattice graph only contains the
two points $x(i)-e_v\Delta x $, $x(i)+e_v\Delta x $. Denote $\rho_j(t)$ by 
\begin{equation*}
\rho_j(t)=\int_{C_{i_+}}\rho(t,x)dx,
\end{equation*}
where $j\in N(i)$ satisfies 
$x(j)=x(i)+e_v\Delta x$ and $C_{i_+}$ is a cube centered at the point $j\in V$.

Without loss of generality, we assume 
$F(x(i)+e_v\Delta x,\rho)\geq F(x(i),\rho)\geq F(x(i)-e_v\Delta x, \rho)$. 
Applying Taylor expansion of \eqref{a1} relative to the direction $e_v$, we obtain
\begin{equation}\label{i}
\begin{split}
&\frac{1}{\Delta x^2}\{\sum_{j\in N_v(i)}[F_j(\rho)- F_i(\rho)]_+\rho_j-
\sum_{j\in N_v(i)}[F_i(\rho)-F_j(\rho)]_+\rho_i\}\\
=&\frac{1}{\Delta x^2}\{[F(x(i)+e_v\Delta x,\rho)- F(x(i),\rho)]\int_{C_{i_+}}\rho(t,x)dx\\
&\hspace{0.5cm}- [F(x(i),\rho)-F(x(i)-e_v\Delta x,\rho)]  \int_{C_{i}}\rho(t,x)dx \} \\
=&\quad\frac{1}{\Delta x^2}\{[\frac{\partial F}{\partial x_v}(x(i), \rho)
\Delta x+\frac{1}{2}\frac{\partial F}{\partial x_v}(x(i),\rho)\Delta x^2]\int_{C_{i_+}}\rho(t,x)dx\\
&\hspace{0.9cm}-[\frac{\partial F}{\partial x_v}(x(i),\rho)\Delta x-
\frac{1}{2}\frac{\partial }{\partial x_v}F(x(i),\rho )\Delta x^2]\int_{C_{i}}\rho(t,x)dx+O(\Delta x^3)\}\\
=&\quad \frac{1}{\Delta x}\frac{\partial F}{\partial x_v}(x(i), \rho)
[\int_{C_{i_+}}\rho(t,x)dx-\int_{C_{i}}\rho(t,x)dx]\\
&+\frac{1}{2}\frac{\partial^2 F}{\partial x_v^2}(x(i), \rho)[\int_{C_{i_+}}\rho(t,x)dx+
\int_{C_{i}}\rho(t,x)dx]+O(\Delta x)\\
=&\quad \frac{\partial F}{\partial x_v}(x(i),\rho)
\int_{C_{i}}\frac{\rho(t,x+e_v\Delta x)-\rho(t,x)}{\Delta x}dx\\
&+\frac{\partial^2 F}{\partial x_v^2}(x(i),\rho)
\int_{C_{i}}\frac{\rho(t,x+e_v\Delta x)+\rho(t,x)}{2}dx+O(\Delta x)\\
=&\int_{C_{i}}\nabla_{x_v}\cdot \big(\rho(t,x)\nabla_{x_v} F(x,\rho)\big)dx+O(\Delta x)\ .
\end{split}
\end{equation}
Similarly, we can show the same results for other possible configurations, such as 
$F(x(i)-e_v\Delta x,\rho)\geq F(x(i),\rho)\geq F(x(i)+e_v\Delta x, \rho)$, 
$F(x(i),\rho)\geq F(x_v-e_v\Delta x,\rho)\geq F(x(i)+e_v\Delta x, \rho)$.

Therefore, combining all directions $e_v$ with $v=1,\cdots, d$, the right-hand-side of \eqref{a1} becomes 
\begin{equation*}
\begin{split}
&\frac{d\rho_i}{dt}-\frac{1}{\Delta x^2}\sum_{v=1}^d\{\sum_{j\in N_v(i)}
[F_j(\rho)- F_i(\rho)]_+\rho_j-\sum_{j\in N_v(i)}[F_i(\rho)-F_j(\rho)]_+\rho_j\}\\
=&\int_{C_{i}}\{\frac{\partial\rho(t,x) }{\partial t}-
\sum_{v=1}^d\nabla_{x_v}\cdot \big(\rho(t,x)\nabla_{x_v} F(x,\rho)\big)\}dx+dO(\Delta x)\\
=&\int_{C_{i}}\{\frac{\partial\rho(t,x) }{\partial t}-
\nabla\cdot \big(\rho(t,x)\nabla_{x} F(x,\rho)\big)\}dx+dO(\Delta x)\\
=&O(\Delta x).
\end{split}
\end{equation*}
This shows that \eqref{a1} is a finite volume first order semi-discretization scheme for \eqref{PDE}.
\end{proof}
%%\begin{remark}
%%Although we proved consistency of the scheme by using a square lattice graph, 
%%we can easily extend it to other regular polygonal discretizations, e.g. 
%%equilateral triangular grids in $\mathbb{R}^2$. 
%%\end{remark}
\subsection{Time discretization}
To deal with the time discretization, we use a forward Euler scheme on \eqref{a1}:
\begin{equation}\label{Euler}
\frac{\rho_i^{k+1}-\rho_i^k}{\Delta t}=\frac{1}{\Delta x^2}
\{\sum_{j\in N(i)} \rho_j^k( F_j(\rho^k)- F_i(\rho^k))_+
-\sum_{j\in N(i)}\rho_{i}^k( F_i(\rho^k)-F_j(\rho^k))_+\}.
\end{equation} 
\begin{lemma}
Assume that the discrete free energy $\mathcal{F}(\rho)$ is strictly convex on $\mathcal{P}_o(G)$. 
\begin{itemize}
\item[(i)] For a given small tolerance constant $\epsilon>0$, and initial measure 
$\rho^0\in \mathcal{P}_o(G)$, there exists a finite time 
$T=O(\log\frac{1}{\epsilon})$, such that when $t>T$,
\begin{equation*}
|\mathcal{F}(\rho(t))-\mathcal{F}(\rho^{\infty})|<\epsilon.
\end{equation*}
\item[(ii)] There exists a constant $h$, such that if $0<\Delta t\leq h$, 
$\rho^k=(\rho^k_i)_{i=1}^n\in \mathcal{P}_o(G)$, for all $k=0,1,\cdots, [\frac{T}{\Delta t}]$,
where $T$ is the value from (i).
\end{itemize}
\end{lemma}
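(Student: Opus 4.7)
Since $\mathcal{F}$ is strictly convex on $\mathcal{P}_o(G)$, it has a unique critical point in the interior, namely the Gibbs measure $\rho^\infty$, which is therefore the global minimizer. This places us in the setting of Corollary~\ref{corollary4} (or, more directly, Theorem~\ref{th12} with $\rho^0$ in the global basin of attraction), so we get
\begin{equation*}
\mathcal{F}(\rho(t))-\mathcal{F}(\rho^\infty)\le e^{-Ct}\bigl(\mathcal{F}(\rho^0)-\mathcal{F}(\rho^\infty)\bigr).
\end{equation*}
Solving $e^{-Ct}(\mathcal{F}(\rho^0)-\mathcal{F}(\rho^\infty))<\epsilon$ for $t$ yields the threshold
\begin{equation*}
T=\tfrac{1}{C}\log\tfrac{\mathcal{F}(\rho^0)-\mathcal{F}(\rho^\infty)}{\epsilon}=O\bigl(\log\tfrac{1}{\epsilon}\bigr),
\end{equation*}
which is exactly what part (i) asserts.

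\textbf{Plan for part (ii): mass conservation.} The first thing I would verify is that $\sum_i\rho_i^{k+1}=\sum_i\rho_i^k$. Summing the right-hand side of \eqref{Euler} over $i$ reproduces exactly the calculation in Lemma~\ref{l1}: after relabeling $i\leftrightarrow j$ in the second sum, the two contributions cancel, so the total discrete mass is preserved along Euler iterations independently of $\Delta t$. This reduces (ii) to proving strict positivity $\rho_i^k>0$ for $k=0,\ldots,[T/\Delta t]$.

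\textbf{Plan for part (ii): positivity.} The difficulty is that the right-hand side of \eqref{a1}, through $F_i(\rho)=v_i+\sum_j w_{ij}\rho_j+\beta\log\rho_i+\beta$, becomes unbounded as $\rho_i\to 0$, so a naive CFL-type bound won't work. My approach is to leverage part~(i) together with Theorem~\ref{existence}(i). By the latter, the continuous solution $\rho(t)$ satisfies $\rho_i(t)\ge c=c(\rho^0)>0$ for all $t\ge 0$; hence the trajectory $\{\rho(t):0\le t\le T\}$ lies in the compact subset
\begin{equation*}
K=\Bigl\{\rho\in\mathcal{P}(G):\ \rho_i\ge\tfrac{c}{2}\text{ for all }i\in V\Bigr\}\subset\mathcal{P}_o(G).
\end{equation*}
On the slightly larger compact neighborhood $K'=\{\rho_i\ge c/4\}$, both the right-hand side $\Psi(\rho)$ of \eqref{a1} and its Jacobian are continuous, hence bounded and Lipschitz with some constant $L=L(c,G,V,W,\beta,\Delta x)$.

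\textbf{Closing the argument.} Standard forward Euler error analysis on $K'$ then gives, as long as the iterates stay in $K'$, an estimate of the form $\|\rho^k-\rho(k\Delta t)\|\le M\Delta t\,(e^{LT}-1)/L$ for some $M$ depending only on the second derivatives of $\Psi$ on $K'$. Choosing
\begin{equation*}
h=\min\Bigl\{\tfrac{c/4}{M(e^{LT}-1)/L},\ \tfrac{c}{4L}\Bigr\}
\end{equation*}
and arguing by induction on $k$, one shows that if $\rho^0,\ldots,\rho^k\in K'$, then $\|\rho^{k+1}-\rho((k+1)\Delta t)\|\le c/4$, so $\rho^{k+1}_i\ge \rho_i((k+1)\Delta t)-c/4\ge c/2-c/4=c/4$, i.e.\ $\rho^{k+1}\in K'\subset\mathcal{P}_o(G)$. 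Combined with the automatic mass conservation, this yields $\rho^k\in\mathcal{P}_o(G)$ for every $k\le[T/\Delta t]$, finishing~(ii). The main obstacle I anticipate is precisely the nonlinear dependence on $\log\rho$: one must restrict the Lipschitz estimate to a compact set bounded away from $\partial\mathcal{P}(G)$ and then use the a~priori lower bound of Theorem~\ref{existence}(i) to guarantee that this compact set actually contains the continuous trajectory on $[0,T]$.
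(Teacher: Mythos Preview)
Your proposal is correct and follows essentially the same approach as the paper: part~(i) invokes the exponential decay from Corollary~\ref{corollary4}, and part~(ii) combines mass conservation (a linear invariant preserved by Euler) with positivity via Euler convergence together with the boundary-repeller bound $\rho_i(t)\ge c(\rho^0)>0$ from Theorem~\ref{existence}(i). If anything, your treatment of positivity is more careful than the paper's, which simply asserts that ``the forward Euler scheme is convergent for Lipschitz right-hand-sides (and this is the case for us)'' without addressing the $\log\rho_i$ singularity; your restriction to a compact set $K'\subset\mathcal{P}_o(G)$ and the accompanying induction is exactly the way to make that step rigorous.
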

\begin{proof}
(i) can be shown by the exponential convergence result in Corollary \ref{corollary4}. 
Since there exists a constant $C>0$, such that 
\begin{equation*}
\mathcal{F}(\rho(T))-\mathcal{F}(\rho^{\infty})\leq e^{-CT}(\mathcal{F}(\rho^0)-\mathcal{F}(\rho^{\infty})),
\end{equation*}
then if $\rho(T)$ satisfies $|\mathcal{F}(\rho(T))-\mathcal{F}(\rho^{\infty})|<\epsilon$, we need to set
\begin{equation*}
T\geq \frac{1}{C}\log \frac{\mathcal{F}(\rho^0)-\mathcal{F}(\rho^{\infty})}
{\mathcal{F}(\rho(T))-\mathcal{F}(\rho^{\infty})}.
\end{equation*}
In other words, we can approximate $\rho^{\infty}$ with $O(\epsilon)$ precision by 
time $T=O(\log\frac{1}{\epsilon})$.

We prove (ii) in two steps. Firstly, we show that $\rho^k=(\rho_i^k)_{i=1}^n$ stays positive ($\min_{i\in V}\rho_i^k>0$)
for all $k=1,\cdots, N$. From Theorem \ref{Derive}, we know that the boundary is a repeller for \eqref{a1}. 
This means that there exists a constant $\epsilon_0=\epsilon_0(\rho^0) >0$, such that 
\begin{equation*}
\min_{i\in \{1,\cdots, n\}}\rho_i(t)\geq\epsilon_0(\rho^0), \quad \textrm{for all $t\geq 0$}.
\end{equation*}
Since the forward Euler scheme is convergent for Lipschitz right-hand-sides (and 
this is the case for us),
there exists constant $h$, such that when $\Delta t\le h$, we have 
\begin{equation*}
\min_{i\in\{1,\cdots, n\}}|\rho_i(k\Delta t)-\rho_i^k|\leq \frac{\epsilon_0}{2}, 
\end{equation*}
%where $M_0$ is a constant depending on initial condition $\rho^0$, $\Delta x^2$ and $T$. 
%If we let $$\Delta t\leq h= \frac{\epsilon_0}{2M_0},$$ 
from which $\min_{i\in\{1,\cdots, n\}}\rho_i^k\geq \frac{1}{2}\epsilon_0>0$.

Secondly, we show that $\sum_{i=1}^n\rho^k_i=1$ for all $k=1,\cdots, N$. 
Since $\sum_{i=1}^n \rho^0=1$, it is sufficient to prove that
\begin{equation*}
\sum_{i=1}^n\rho_i^{k+1}=\sum_{i=1}^n\rho_i^k,\quad \textrm{for any $k$.}
\end{equation*}
This is a linear invariant, and it is therefore kept by Euler method.  Indeed,
an explicit computation gives
%From \eqref{Euler}, we know that 
\begin{equation*}
\begin{split}
\sum_{i=1}^n\frac{\rho_i^{k+1}-\rho_i^k}{\Delta t}
=&\sum_{i=1}^n\frac{1}{\Delta x^2}\{\sum_{j\in N(i)} \rho_j^k( F_j(\rho^k)- F_i(\rho^k))_+
-\sum_{j\in N(i)}\rho_{i}^k( F_i(\rho^k)-F_j(\rho^k))_+\}\\
=&0\ .
%=&\sum_{i=1}^n\sum_{j\in N(i)} ( F_j(\rho^k)- F_i(\rho^k))g_{ij}(\rho^k)\\
%=&\sum_{(i,j)\in E} F_j(\rho^k)g_{ij}(\rho^k)-\sum_{(i,j)\in E} F_i(\rho^k))g_{ij}(\rho^k)\\
%&\textrm{Relabel $i$ and $j$ for the first formula and notice $g_{ij}=g_{ji}$}\\
%=&\sum_{(i,j)\in E} F_i(\rho^k)g_{ji}(\rho^k)-\sum_{(i,j)\in E} F_i(\rho^k))g_{ij}(\rho^k)=0.
\end{split}
\end{equation*}
\end{proof}

\begin{remark}
In practice, cfr. with \cite{c2015}, we may 
consider%\footnote{This is a classical CFL condition for a lattice graph.} 
$\Delta t \leq \frac{\Delta x^2}{\Delta(G)M}$, with 
$M=2\sup_{i\in V]}|F_i(\rho^k)|$ and $\Delta (G)$ representing the maximal degree of the graph $G$.
For sufficiently small $\Delta t$, we know that $M$ will be a bounded function
up to a finite time $T$. 
\end{remark}

\subsection{An extension}
We extend the idea of semi-discretization scheme \eqref{a1} to deal 
with more general Fokker-Planck equations.  %non-gradient flow type .  
Consider
\begin{equation}\label{general}
\frac{\partial \rho}{\partial t}=\nabla \cdot [\rho \big(f_v(x,\rho)\big)_{v=1}^d].
\end{equation}
Here, \eqref{general} may fail to be a gradient flow with respect to the 2-Wasserstein metric. 
In this case, we cannot consider a discretization which is a gradient flow of a
certain free energy.  However, we can still  construct a flow (semi-discretization scheme)
whose solutions lie on the probability set. 
The observation to use is that there always exists functions $(u_v(x,\rho))_{i=1}^d$
such that 
\begin{equation*}
\nabla_{x_v}u_v(x,\rho)=f_v(x,\rho),\quad\textrm{for $v\in\{1,\cdots, d\}$}.
\end{equation*}
\begin{example}[van der Pol]
Consider the 2 dimensional Fokker-Planck equation
\begin{equation*}
\frac{\partial \rho}{\partial t}=-\nabla\cdot (\rho\begin{pmatrix} x_2\\ (1-x_1^2)-x_2  \end{pmatrix})+ 
\frac{\partial^2\rho }{\partial x_2^2}=-\nabla\cdot (\rho\begin{pmatrix} f_1(x,\rho)\\ f_2(x,\rho) \end{pmatrix}),
\end{equation*} 
where $x=(x_1, x_2)$, $f_1(x,\rho)=x_2$ and $f_2(x,\rho)=(1-x_1^2)-x_2 +\nabla_{x_2}\log\rho(x)$. We let 
\begin{equation*}
u_1(x,\rho)=\int f_1(x,\rho)dx_1=x_1x_2,
\end{equation*}
and 
\begin{equation*}
u_2(x,\rho)=\int f_2(x,\rho)dx_2=(1-x_1^2)x_2-\frac{1}{2}x_2^2+\log\rho(x_1,x_2).
\end{equation*}
Then the Fokker-Planck equation becomes 
\begin{equation*}
\frac{\partial \rho}{\partial t}=-\nabla\cdot (\rho\begin{pmatrix} \nabla_{x_1} u_1(x,\rho)\\ 
\nabla_{x_2}u_2(x,\rho) \end{pmatrix}).
\end{equation*}
\end{example}

Based on the above observation, we naturally extend \eqref{a1} to the semi-discretization
of \eqref{general} 
\begin{equation}\label{flow}
\frac{d \rho_i}{dt}=\frac{1}{\Delta x^2}\{\sum_{v=1}^d\sum_{j\in N_v(i)}[u_v(i,\rho)- u_v(j,\rho)]_+\rho_j
-\sum_{v=1}^d\sum_{j\in N_v(i)}[u_v(j,\rho)-u_v(i,\rho)]_+\rho_i\} .
\end{equation}
We observe that \eqref{a1} is a special case of \eqref{flow}.
Similarly to Theorem \ref{semi}, we can show that the semi-discretization \eqref{flow}
is a consistent finite volume scheme for \eqref{general}.

\section{Numerical experiments}\label{numerical}
In this section, we illustrate the proposed semi-discretization with several numerical experiments. 

\begin{example}[Nonlinear Fokker-Planck equation]\label{Exam2}
We consider a nonlinear interaction-diffusion equation in granular gas 
\cite{benedetto1998non, villani2002review},
\begin{equation*}\label{main}
\frac{\partial \rho}{\partial t}=\nabla\cdot[\rho\nabla\big(W*\rho+V(x)\big)]+\beta\Delta \rho,
\end{equation*}
where $W(x,y)=\frac{1}{3}\|x-y\|^3$ and $V(x)=\frac{\|x\|^2}{2}$ with $\|\cdot\|$ the 
2 norm in $\mathbb{R}^d$. %$d=1, 2$. 

The PDE has a unique stationary measure (Gibbs measure),  
\begin{equation*}
\rho^*(x)=\frac{1}{K}e^{-\frac{\int_{\mathbb{R}^d}W(x,y)\rho^*(y)dy+V(x)}{\beta}},\quad 
\textrm{where} \quad K=\int_{\mathbb{R}^d}e^{-\frac{\int_{\mathbb{R}^d}W(x,y)\rho^*(y)dy+V(x)}{\beta}}dx.
\end{equation*}

We apply \eqref{a1} to discretize this PDE with $\beta=0.01$:
\begin{equation*}
\begin{split}
\frac{ d\rho_i}{dt}=&\frac{1}{\Delta x^2}\{\sum_{j\in N(i)} \rho_j
( \sum_{i=1}^n w_{ij}\rho_i-\sum_{j=1}^nw_{ij}\rho_j+v_j-v_i +\beta\log\rho_j-\beta\log\rho_i)_+\\
&\quad -\sum_{j\in N(i)}\rho_{i}( \sum_{j=1}^nw_{ij}\rho_j-
\sum_{i=1}^nw_{ij}\rho_i+v_i-v_j+\beta \log\rho_i-\beta\log\rho_j)_+\},
\end{split}
\end{equation*}
and further discretize in time with
the forward Euler method \eqref{Euler} with time step $\Delta t =10^{-4}$
and initial condition $\rho^0_i=\frac{1}{L}e^{-\frac{\|x(i)\|^2}{200}}$, 
$L=\sum_{i=1}^ne^{-\frac{\|x(i)\|^2}{200}}$.

%When $d=1$, we take a one dimensional lattice graph of $[-5, 5]$ with $\Delta x=0.5$;
%see Figure \ref{Exam21}.
 When $d=2$, we consider a two dimensional lattice graph of $[-5, 5]\times[-5, 5]$ with 
$\Delta x=0.5$; see Figure \ref{Exam22}.
%%\begin{figure}
%%\centering
%%\subfloat[Gibbs measure]{\includegraphics[scale=0.4]{1DnonlinearGibbs.eps}}\hspace{1cm}
%%\subfloat[$\mathcal{F}(\rho(t))-\mathcal{F}(\rho^\infty)$ in function of time.]{\includegraphics[scale=0.4]{nonlinear1D.eps}}
%%\caption{Example \ref{Exam2}: 1-d.}
%%\label{Exam21}
%%\end{figure}
\begin{figure}
\centering
\subfloat[Gibbs measure]{\includegraphics[scale=0.4]{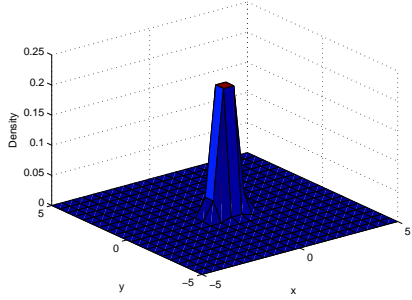}}\hspace{1cm}
\subfloat[Semi-log Y plot of $\mathcal{F}(\rho)-\mathcal{F}(\rho^\infty)$ w.r.t. iteration.]{\includegraphics[scale=0.3]{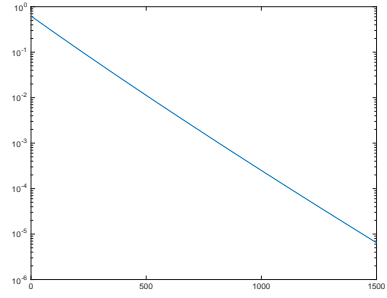}}
\caption{Example \ref{Exam2}: 2-d.}
\label{Exam22}
\end{figure}

It is known, see \cite{carrillo2003kinetic}, that solutions of this PDE converge
to the unique Gibbs measure, which itself converges to
a $\delta$-measure supported at the origin when $\beta\rightarrow 0$. 
In addition, the solution converges to the Gibbs measure exponentially.  We observe that
\eqref{a1} reflects all of
these behaviors and the free energy along solutions of \eqref{a1} decreases exponentially.  
\end{example}

\begin{example}[Linear Fokker-Planck equation]\label{LinearFP}
We consider a linear Fokker-Planck equation \begin{equation}\label{lvdp}
\frac{\partial \rho}{\partial t}=\nabla\cdot[\rho\nabla V(x)]+\beta\Delta \rho,
\end{equation}
with a potential function $V(x)=\frac{\|x\|^4}{4}-\frac{\|x\|^2}{2}$. 
Here the underlying state is $\mathbb{R}^d$. %$d=1, 2$. 
In this case, the unique Gibbs measure is.
\begin{equation*}
\rho^*(x)=\frac{1}{K}e^{-\frac{V(x)}{\beta}},\quad \textrm{where} \quad K=\int_{\mathbb{R}^d}e^{-\frac{V(x)}{\beta}}dx.
\end{equation*}

We use \eqref{a1} to approximate the solution of this PDE with $\beta=0.01$,
\begin{equation*}
\begin{split}
\frac{ d\rho_i}{dt}=&\frac{1}{\Delta x^2}\{\sum_{j\in N(i)} \rho_j( v_j-v_i +\beta\log\rho_j-\beta\log\rho_i)_+\\
&\quad -\sum_{j\in N(i)}\rho_{i}(v_i-v_j+\beta \log\rho_i-\beta\log\rho_j)_+\},
\end{split}
\end{equation*}
and further discretize in time by the forward Euler method \eqref{Euler} with time step 
$\Delta t =10^{-4}$.  Initial condition is
$\rho^0_i=\frac{1}{L}e^{-\frac{\|x(i)\|^2}{200}}$, $L=\sum_{i=1}^ne^{-\frac{\|x(i)\|^2}{200}}$.

%If $d=1$, we take a uniform discretization of $[-5, 5]$ with $\Delta x=0.5$;%see Figure \ref{Exam31}. 
If $d=2$, we take a uniform discretization of $[-5, 5]\times[-5, 5]$ with $\Delta x=0.5$;
see Figure \ref{Exam32}.
%%%%\begin{figure}%[H]
%%%%\centering
%%%%\subfloat[Gibbs measure]{\includegraphics[scale=0.3]{1dlinear.eps}}\hspace{1cm}
%%%%\subfloat[$\mathcal{F}(\rho(t))-\mathcal{F}(\rho^\infty)$ in function of time.]{\includegraphics[scale=0.3]{1dlineardecrease.eps}}
%%%%\caption{Example \ref{LinearFP}: 1-d.}
%%%%\label{Exam31}
%%%%\end{figure}
\begin{figure}%[H]
\centering
\subfloat[Gibbs measure]{\includegraphics[scale=0.4]{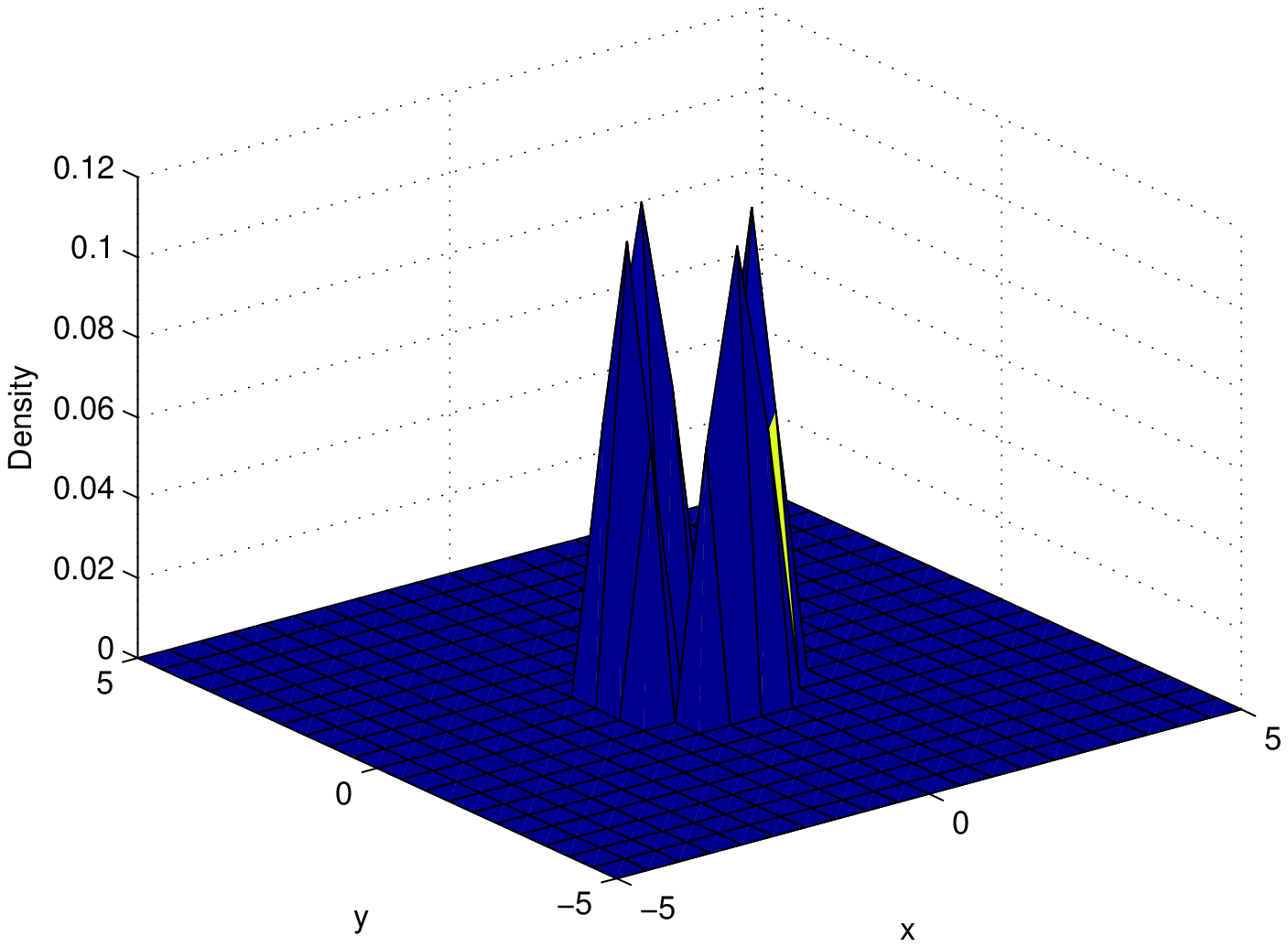}}\hspace{1cm}
\subfloat[Semi-log Y plot of $\mathcal{F}(\rho)-\mathcal{F}(\rho^\infty)$ w.r.t. iteration.]{\includegraphics[scale=0.3]{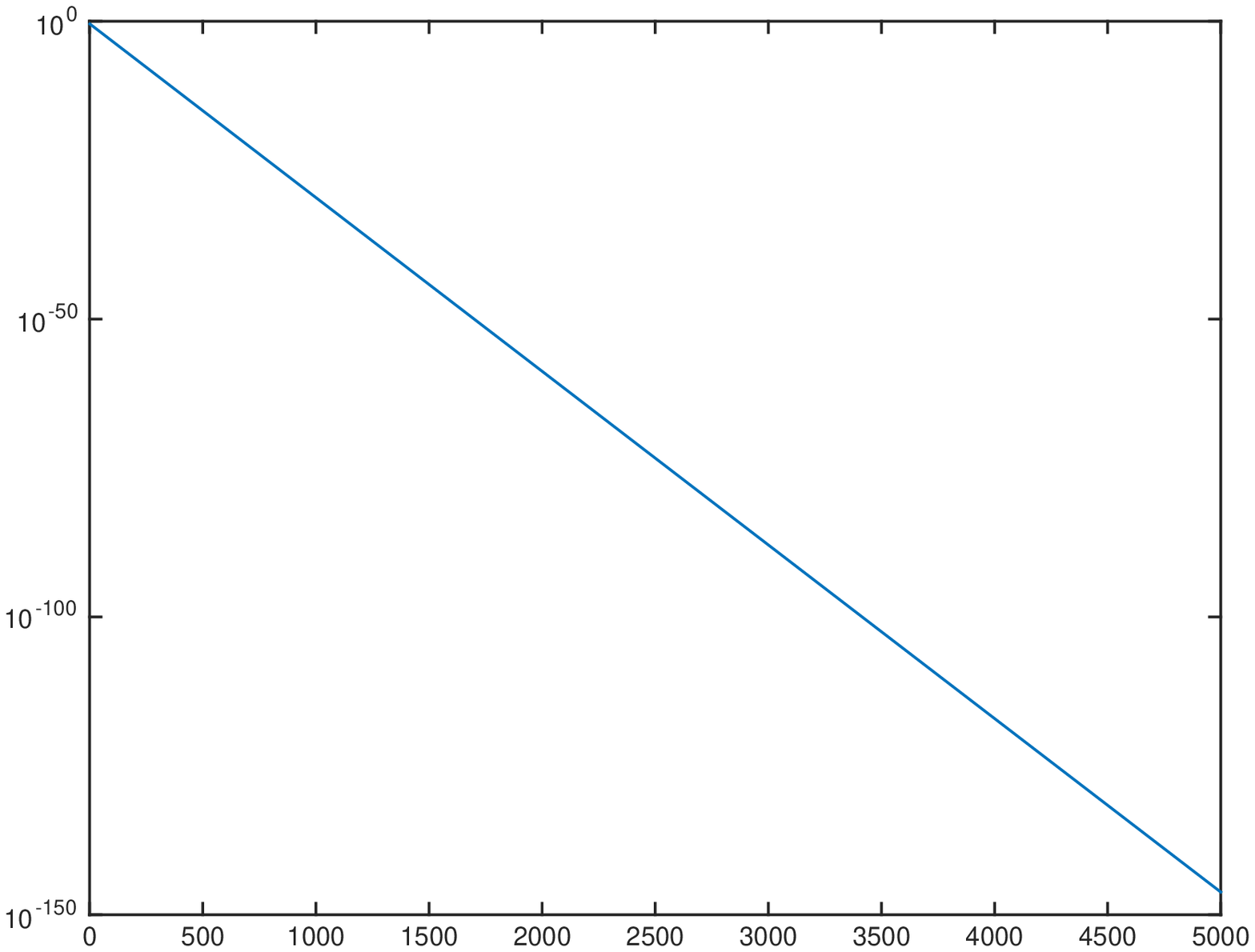}}
\caption{Example \ref{LinearFP}: 2-d.}
\label{Exam32}
\end{figure}

%Is is known, see \cite{EP}, that the solution of the linear Fokker-Planck equation always converges
%to the Gibbs measure exponentially. 
The computational results in both cases reflects that the linear Fokker-Planck equation always converges
to the Gibbs measure exponentially, which is in agreement with the discussion of Section \ref{convergence}. Note that
here the potential function $V(x)$ is not strictly convex.  It is the strict convexity of
the entropy in probability set that plays the key role in convergence. 
This asymptotic convergence rate is fully determined by 
$\lambda_{\mathcal{H}}(\rho^{\infty})$ in Definition \ref{def}. 
\end{example}

%%%\begin{example}[Hexagonal graph]\label{HexGraph}
%%%We apply \eqref{a1} with a regular hexagonal graph to approximate the solution of the linear 
%%%Fokker-Planck equation \eqref{lvdp}, where $\Delta x=\sqrt{3} a$ and $a=\frac{1}{2\sqrt{3}}$
%%%is the length of the hexagon side. 
%%%We consider the potential function $V(x)=\frac{\|x\|^2}{2}$ and $\beta=0.01$ in $\mathbb{R}^2$. 
%%%We solve \eqref{a1} similarly to Example \ref{LinearFP}. 
%%%\begin{figure}[H]
%%%\centering
%%%\subfloat[Hexagonal graph.]{\includegraphics[scale=0.3]{honeycomb}}\hspace{1cm}
%%%\subfloat[Gibbs measure.]{\includegraphics[scale=0.3]{honeycomb1.eps}}
%%%\caption{Example \ref{HexGraph}.}
%%%\label{Exam4}
%%%\end{figure}
%%%A node in the hexagonal graph typically has $6$ neighbors, while in the square lattice just $4$.
%%%The two graphs represents two different discretization of the spatial variable.   
%%%However, in Theorem \ref{existence}, we know that \eqref{a1} always converges to its equilibrium, 
%%%which is the discrete Gibbs measure of current spatial discretization.
%%%%A computation of $\lambda_{\mathcal{H}}(\rho^{\infty})$ are illustrated in appendix. 
%%%\end{example}

\begin{example}[General Fokker-Planck equation]\label{GeneralFP}
We consider the Fokker-Planck equation \cite{liVDP}
\begin{equation*}
\frac{\partial \rho}{\partial t}+\nabla\cdot (\rho\begin{pmatrix} x_2\\ 
(1-x_1^2)-x_2  \end{pmatrix})=\beta \Delta_{x_2}\rho,
\end{equation*}
whose underlying state is the stochastic van der Pol oscillator
\begin{equation*}
\begin{split}
dx_1&= x_2dt\\
dx_2&=[(1-x_1^2)x_2-x_1]dt+\sqrt{2\beta} dW_t.
\end{split}
\end{equation*}

We apply the semi-discretization \eqref{flow} to approximate the solution of this PDE.
%%\begin{equation*}\label{FPd}
%%\begin{split}
%%\frac{ d\rho_i}{dt}=&\frac{1}{\Delta x^2}\{\sum_{j\in N_1(i)} 
%%\rho_j[ u_{1j}-u_{1i}]_+-\sum_{j\in N_1(i)}\rho_i[ u_{1i}-u_{1j}]_+\\
%%&\quad +\sum_{j\in N_2(i)} \rho_j[ u_{2j}-u_{2i}]_+-\sum_{j\in N_2(i)}\rho_i[ u_{2i}-u_{2j}]_+\}.
%%\end{split}
%%\end{equation*}
%where $i\in V\subset \mathbb{R}^2$ and
%\begin{equation*}
%u_{1i}=-x_1x_2|_{(x_1, x_2)=i},\quad
%u_{2i}=-\alpha(1-x_1^2)x_2+\frac{1}{2}x_2^2+\beta\log\rho(x_1, x_2)|_{(x_1, x_2)=i}.
%\end{equation*}
Further, we discretize in time
by the forward Euler method \eqref{Euler} with time step $\Delta t =10^{-4}$.
Initial condition is 
$\rho^0_i=\frac{1}{L}e^{-\frac{\|x(i)\|^2}{200}}$, $L=\sum_{i=1}^ne^{-\frac{\|x(i)\|^2}{200}}$.

Let $\beta=0.125$, and consider a lattice graph on $[-10, 10]\times [-10, 10]$
with $\Delta x=0.4$.  The result in Figure \ref{Exam51} shows the obtained
approximation of the stationary measure of the stochastic van der Pol oscillator.  
\begin{figure}
%\vspace{-0.5cm}
\centering
{\includegraphics[scale=0.4]{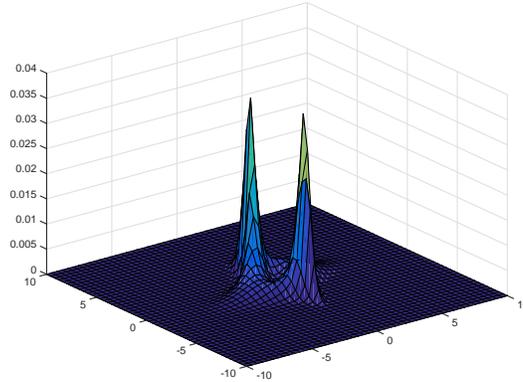}}%\hspace{1cm}
\caption{Example \ref{GeneralFP}.  Stationary measure, van der Pol.}
\label{Exam51}
%\vspace{-0.5cm}
\end{figure}

Similarly, we consider the Fokker-Planck equation
\begin{equation*}
\frac{\partial \rho}{\partial t}+\nabla\cdot (\rho\begin{pmatrix} x_2\\  
-2\xi\omega x_2+\omega x_1-\omega^2rx_1^3 \end{pmatrix})=\beta \Delta_{x_2}\rho,
\end{equation*}
associated with the stochastic Duffing oscillator
\begin{equation*}
\begin{split}
dx_1&= x_2dt\\
dx_2&=[-2\xi\omega x_2+\omega x_1-\omega^2rx_1^3]dt+\sqrt{2\beta} dW_t.
\end{split}
\end{equation*}

Let $\xi=0.2$, $\omega=1$, $r=0.1$, $\beta=0.125$ and a lattice graph of $[-10, 10]\times [-10, 10]$
with $\Delta x=0.4$. 
The computed invariant measure is shown in Figure \ref{Exam52}. 
\begin{figure}
\centering
{\includegraphics[scale=0.4]{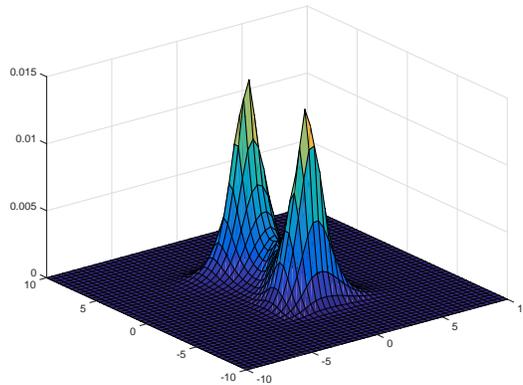}}
\caption{Example \ref{GeneralFP}.  Stationary measure, Duffing.}
\label{Exam52}
\end{figure}
\end{example}
In these examples, we have shown that our discretization scheme \eqref{flow} finds a
two-peaks stationary measure, even though the underlying Fokker-Planck equations are
not gradient flow type. It is interesting to observe that,
in the above two figures, stationary measures are supported around the limit cycles of 
the oscillators. 
The two peaks in the stationary measures reflect that there is slow and fast motion
in the underlying dynamical systems; namely, the two peaks are witness to the fact that
there is a larger probability that a trajectory at time $t$ will be found
in the slow motion region; see figure \ref{4}.
\begin{figure}
\centering
{\includegraphics[scale=0.4]{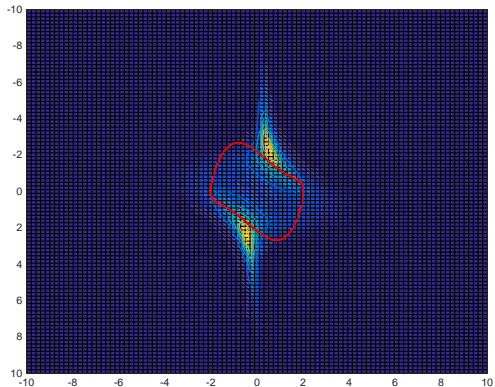}}
\caption{The plot of stationary measure and limit cycle (red) of van der Pol oscillator.}
\label{4}
\end{figure}
 
\section{Conclusion}
We have derived a new semi-discretization scheme \eqref{a1} for the PDE \eqref{PDE}. 
In comparison to other methods, our scheme \eqref{a1} has the following advantages. 

\begin{enumerate}
\item
Firstly, our scheme \eqref{a1} works on a finite graph, which is a spatial discretization 
of the underlying state. As a result of having this graph, we can handle a variety of
boundary conditions, e.g. zero-flux conditions or periodic conditions, and different 
types of underlying states, such as $\mathbb{R}^d$, open set of $\mathbb{R}^d$, or Riemannian manifold.  
\item
Secondly, we derive \eqref{a1} from the viewpoint of free energy and optimal transport. 
Hence, \eqref{a1} can keep the gradient flow structure of \eqref{PDE}.  
On one hand, this fact gives that \eqref{a1} is a well defined flow whose equilibria
are discrete Gibbs measures;
on the other hand, solutions of \eqref{a1} converge to a Gibbs measure with
exponential rate. This property allowed us to discretize \eqref{a1} in time
by a forward Euler scheme.
\item
Lastly, we bring a new twist to discretize the diffusion term, namely
\begin{equation*}
\frac{1}{\Delta x^2}\sum_{j\in N(i)}(\log\rho_j-\log\rho_i)g_{ij}(\rho).
\end{equation*}
We called it Log-Laplacian, and it is quite different from commonly known centered
differences or the Graph Laplacian. Although the log term brings some nonlinearities
into the algorithm, it also brings many benefits. One is that solutions of \eqref{a1}
always stay in $\mathcal{P}_o(G)$, and thus remain positive and conserve
the total probability automatically. The other is that the scheme naturally inherits 
the convexity of the entropy, a fact which plays a critical role in the convergence result.  
\end{enumerate}
%%
%%Our results also open the door to many new research questions. 
%%``What is the asymptotic convergence rate of \eqref{a1}?'' This rate certainly 
%%depends on the boundary conditions of the PDE and on the explicit form of
%%the interaction potential, and it deserves more investigation. 
%%In future works, we will study this problem and work on other gradient flow
%%types of advection diffusion equations. 

\appendix
\setcounter{secnumdepth}{0}
\section{Appendix}\label{Append}
%%In general, it appears to be very difficult to obtain sharp information on the
%%asymptotic convergence rates of Theorem \ref{th12}, namely the quantity
%%$\lambda_{\mathcal{F}}(\rho)$ there. 
%Here, we consider a simple 1-d model problem, for which exact formulas for $\lambda_{\mathcal{F}}(\rho)$
%can be given, and we can highlight the dependence of these convergence rates
Generally, to obtain $\lambda_{\mathcal{F}}(\rho)$ in Definition \ref{def} is not easy. Below, we give simple 1-d model example to illustrate situations in which $\lambda_{\mathcal{F}}(\rho)$ can be explicitly obtained, and its dependence on the graph structure (the boundary conditions of the PDE).

\textbf{A 1-d model problem}.
Suppose that the free energy contains only the linear entropy term,
so that the gradient flow is the heat equation:
\begin{equation}\label{heat1-d}
\frac{\partial \rho}{\partial t}=\Delta \rho, \quad x\in (a,b).
\end{equation}
Here, we consider either 
(i) Neumann boundary conditions (zero flux) 
$\frac{\partial \rho}{\partial x}|_{x=a}=\frac{\partial \rho}{\partial x}|_{x=b}=0$, 
or (ii) periodic boundary conditions $\rho(t,a)=\rho(t,b)$.
%In both cases, heat equations are gradient flows of the linear entropy $\int_a^b\rho(x)\log\rho(x)dx$. 

We approximate the solution of \eqref{heat1-d} by \eqref{a1}, with a
uniform discretization $\Delta x=\frac{b-a}{n-1}$:
\begin{equation}\label{discrete-heat}
\frac{d\rho_i}{dt}=\frac{1}{\Delta x^2}\{\sum_{j\in N(i)}\rho_j(\log\rho_j-\log\rho_i)_+
-\sum_{j\in N(i)}\rho_i(\log\rho_i-\log\rho_j)_+\}.
\end{equation}
The above two types of boundary conditions lead to distinct graph structures. \\
(i) A lattice graph $L_{n}$: % deals with the Neumann condition. 
\begin{center}
  \begin{tikzpicture}[->,shorten >=1pt,auto,node distance=1cm,
        thick,main node/.style={circle,fill=blue!20,draw,minimum size=0.2cm,inner sep=1pt]} ]
   \node[main node] (1) {};
    \node[main node] (2) [right of=1]  {};
    \node[main node](3)[right of=2]{};
    \node[main node](4)[right of=3]{};
    \path[-]
    (1) edge node {} (2)
    (2) edge node{} (3)
    (3) edge node{} (4);        
\end{tikzpicture}
\end{center}
(ii) A cycle graph $C_n$: %handles the periodic condition. 
\begin{center}
\begin{tikzpicture}[ ->,shorten >=1pt,auto,node distance=3cm,
        thick,main node/.style={circle,fill=blue!20,draw,minimum size=0.2cm,inner sep=0pt]} ]
\def \n {6}
\def \radius {1}
\def \margin {0} 
\foreach \s in {1,...,\n}
{
  \node[draw, main node] (\s) at ({360/\n * (\s - 1)}:\radius) {};
}
    \path[-]
    (1) edge node {} (2)
    (2) edge node {} (3)
       (3) edge node {} (4)
    (4) edge node {} (5)
    (5) edge node {} (6)
    (6) edge node{} (1);
\end{tikzpicture}
\end{center}
In both cases, \eqref{discrete-heat} is the gradient flow of the discrete linear entropy
\begin{equation*}
\mathcal{H}(\rho)=\sum_{i=1}^n\rho_i\log\rho_i,
\end{equation*}
and the unique Gibbs measure is $\rho^{\infty}=(\frac{1}{n}, \cdots, \frac{1}{n})$. 
We are going to estimate how fast the solution $\rho(t)$ of the semi-discretization scheme 
\eqref{discrete-heat} converges to the equilibrium $\rho^{\infty}$. 

As we have seen in Theorem \ref{th12}, the asymptotic convergence rates are determined by 
$\lambda_{\mathcal{F}}(\rho)$:
\begin{equation}\label{CE}\begin{split}
\lambda_{\mathcal{H}}(\rho^{\infty})=\min_{\Phi\in \mathbb{R}^n}
\{ & \frac{1}{\Delta x^4}\sum_{(i,j)\in E}\sum_{(k,l)\in E}h_{ij, kl}(\Phi_i-\Phi_j)_+(\Phi_k-\Phi_l)_+~:~ \\
\qquad & \sum_{(i,j)\in E}(\frac{\Phi_i-\Phi_j}{\Delta x})^2_+\rho_i=1\},
\end{split}
\end{equation}
where 
\begin{equation*}
\textrm{$h_{ij, kl}=f_{ik}+f_{jl}-f_{il}-f_{jk}$, \quad and \quad
$f_{ij}(\rho^\infty)=\frac{\partial^2}{\partial\rho_i\partial\rho_j}
\mathcal{H}(\rho)|_{\rho=\rho^{\infty}}=\begin{cases} \frac{1}{\rho_i^{\infty}} \quad &\textrm{if $i=j$};\\
0\quad &\textrm{if $i\neq j$.}\quad  
\end{cases}$ }
\end{equation*}
For the present model, we can find exact values of \eqref{CE} for the above two graphs. 

\begin{theorem}\label{mainClaim}
%\textbf{Claim:} %The following two formulas hold.
We have
\begin{equation*}
\begin{split}
\lambda_{\mathcal{H}}(\rho^{\infty})=\frac{\pi^2}{(b-a)^2}+o(1), \quad (L_n)
\end{split}
\end{equation*}
and
\begin{equation*}
\begin{split}
\lambda_{\mathcal{H}}(\rho^{\infty})=\frac{4\pi^2}{(b-a)^2}+o(1).\quad (C_n)
\end{split}
\end{equation*}
\end{theorem}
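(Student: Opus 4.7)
The plan is to reduce the variational problem defining $\lambda_{\mathcal{H}}(\rho^\infty)$ to a Rayleigh quotient for the ordinary graph Laplacian $L$ of $L_n$ or $C_n$, and then invoke the known spectra of these Laplacians to pass to the continuum limit.

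First I would evaluate $f_{ij}$ at the uniform measure: since $\mathcal{H}(\rho)=\sum\rho_i\log\rho_i$, one has $f_{ij}(\rho^\infty)=n\,\delta_{ij}$, and hence $h_{ij,kl}=n(\delta_{ik}+\delta_{jl}-\delta_{il}-\delta_{jk})$. Substituting this into the double sum in \eqref{CE} and splitting the four $\delta$-terms gives four single sums. Writing
$$\alpha_i=\sum_{j\in N(i)}(\Phi_i-\Phi_j)_+,\qquad \beta_i=\sum_{j\in N(i)}(\Phi_j-\Phi_i)_+,$$
the $\delta_{ik}$ and $\delta_{jl}$ pieces produce $\sum_i\alpha_i^2$ and $\sum_i\beta_i^2$, while the $-\delta_{il}$ and $-\delta_{jk}$ pieces each produce $-\sum_i\alpha_i\beta_i$. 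Collecting, the numerator is a constant times $\sum_i(\alpha_i-\beta_i)^2$. The central algebraic observation is the elementary identity $(x)_+-(-x)_+=x$, which gives
$$\alpha_i-\beta_i=\sum_{j\in N(i)}(\Phi_i-\Phi_j)=(L\Phi)_i,$$
the \emph{linear} graph Laplacian applied to $\Phi$. So the numerator is a multiple of $\|L\Phi\|^2$, with the positive parts eliminated entirely.

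For the constraint, pairing each unordered edge with its reverse and using $(a)_+^{2}+(-a)_+^{2}=a^2$, one obtains $\sum_{(i,j)\in E}(\Phi_i-\Phi_j)_+^2=\langle\Phi,L\Phi\rangle$. Combining both simplifications and tracking the factors of $n$ coming from $\rho_i=1/n$ and $f_{ii}=n$, the problem collapses to
$$\lambda_{\mathcal{H}}(\rho^\infty)=\frac{1}{\Delta x^2}\min_{\Phi\notin\ker L}\frac{\|L\Phi\|^2}{\langle\Phi,L\Phi\rangle}=\frac{\mu_{\min}(L)}{\Delta x^2},$$
the last identification being immediate by diagonalizing in the eigenbasis of $L$ (for $\Phi=\sum c_k\phi_k$ one minimizes $\sum\mu_k^2 c_k^2$ subject to $\sum\mu_k c_k^2$ fixed, which concentrates on the smallest nonzero eigenvalue). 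It then remains to invoke the standard spectral formulas $\mu_{\min}(L_n)=2(1-\cos(\pi/n))$ for the path graph and $\mu_{\min}(C_n)=2(1-\cos(2\pi/n))$ for the cycle graph, Taylor expand $2(1-\cos x)\sim x^2$, and use $\Delta x=(b-a)/(n-1)$ to conclude the two asymptotic values $\pi^2/(b-a)^2$ and $4\pi^2/(b-a)^2$.

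The main obstacle -- and the content of the result -- is the algebraic collapse described above: one might a priori fear that the positive-part structure binds the minimum to the sign pattern of the optimizer, so that $\lambda_{\mathcal{H}}(\rho^\infty)$ would not correspond to any ordinary linear spectral quantity. The happy surprise is that both the numerator and the constraint simplify \emph{exactly} (not merely to leading order) to their linear analogues, so the nonlinear character of the upwind scheme dissolves into the classical eigenvalue problem for the graph Laplacian, whose spectrum is completely explicit.
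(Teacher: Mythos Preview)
Your argument is correct and in fact cleaner than the paper's. The paper proceeds case by case: for $L_n$ it first assumes (with only a footnote as justification) that the minimizing $\Phi$ is monotone along the path, passes to edge-difference variables $\xi_i=(\Phi_i-\Phi_{i+1})/(\sqrt{n}\,\Delta x)$, and reduces the problem to the smallest eigenvalue of the $(n-1)\times(n-1)$ tridiagonal matrix $A=\mathrm{tridiag}(-1,2,-1)$; for $C_n$ it builds an $n\times n$ matrix $B$ by appending the extra edge, introduces a rectangular change-of-variables matrix $P$, and after some manipulation identifies the answer with the second smallest eigenvalue of $B$, whose spectrum is then computed explicitly.

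Your route bypasses all of this machinery. The identity $\alpha_i-\beta_i=(L\Phi)_i$ eliminates the positive parts from the numerator \emph{for every} $\Phi$, not just monotone ones, so no ordering assumption is needed, no auxiliary matrices $A,B,P$ arise, and both graph topologies are handled by a single sentence. The two approaches do meet at the end --- the paper's $A$ is precisely the edge Laplacian $DD^{T}$ of the path (with $D$ the incidence matrix), hence isospectral on the nonzero part with your vertex Laplacian $L=D^{T}D$ --- but your reduction to the algebraic connectivity $\mu_{\min}(L)$ is more direct and makes transparent \emph{why} the upwind nonlinearity dissolves at the uniform measure: the quadratic form is invariant under $(\Phi_i-\Phi_j)\mapsto\pm(\Phi_i-\Phi_j)$ once $\rho$ is constant. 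What the paper's approach buys in exchange is a fully self-contained spectral computation (Lemma on the spectrum of $B$), whereas you invoke the standard formulas for path and cycle Laplacians as known.
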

\begin{proof}
First, consider the lattice graph $L_n$. 
Without loss of generality, let $(\Phi_i)_{i=1}^n$ in \eqref{CE} satisfy the relation
\begin{equation}\label{p1}
\Phi_1\geq \Phi_2 \geq \cdots \geq \Phi_n.
\end{equation}
Denote $\xi:=(\xi_i)_{i=1}^{n-1}\in\mathbb{R}^{n-1}_+$ by
\begin{equation}\label{p2}
\xi_i:=\frac{\Phi_{i+1}-\Phi_{i}}{\sqrt{n}\Delta x}, \quad 1\leq i\leq n,
\end{equation}
and substitute $\rho^{\infty}$ into \eqref{CE}, to obtain
\begin{equation*}
\lambda_{\mathcal{H}}(\rho^{\infty})=
\min_{\xi\in\mathbb{R}^{n-1}_+}\{\frac{1}{\Delta x^2}\xi^TA\xi\quad :\quad \xi^T\xi=1\},
\end{equation*}
where 
\begin{equation*}
A=\begin{pmatrix} 
  2 &  -1 &  &  \\ 
 -1 & 2 & -1 &  \\
\\
&\ddots & \ddots &\ddots &\\
\\
& & -1& 2 & -1\\
& & &-1 & 2
\end{pmatrix}\in \mathbb{R}^{(n-1)\times (n-1)}. 
\end{equation*}
It is simple to observe that $A$ is positive definite and 
that\footnote{Here the eigenvector of $A$ corresponding to the smallest
eigenvalue satisfies the assumption \eqref{p1}.} 
\begin{equation*}
\begin{split}
\lambda_{\mathcal{H}}(\rho^{\infty})=&\frac{1}{\Delta x^2}\times
(\textrm{the smallest eigenvalue of $A$})=\frac{1}{\frac{(b-a)^2}{(n-1)^2}}[2-2\cos(\frac{\pi}{n-1})]\\
=&\frac{\pi^2}{(b-a)^2}+o(1).
\end{split}
\end{equation*}

Next, we analyze the convergence rate for the cycle graph $C_n$. 
Again we assume the relation \eqref{p1} and let 
$\xi$ as in \eqref{p2}. Since $C_n$ has one more edge than $L_n$, we let $\eta \in \mathbb{R}$:  
\begin{equation*}
 \eta:=\frac{\Phi_{1}-\Phi_{n}}{\sqrt{n}\Delta x}=\sum_{i=1}^{n-1} \xi_i.%=\xi^T\textbf{1},
\end{equation*}
%where $\textbf{1}=(1,\cdots, 1)=\mathbb{R}^{n-1}$.
Substituting $\rho^{\infty}$ into \eqref{CE}, we have
\begin{equation}\label{16}
\begin{split}
\lambda_{\mathcal{H}}(\rho^{\infty})=&\min_{(\xi, \eta)\in\mathbb{R}^{n}_+} \{\frac{1}{\Delta x^2} 
[\xi^TA\xi+2\xi_1\eta+2\xi_{n-1}\eta+2\eta^2]\, : \\
\quad & \xi^T\xi+\eta^2=1, ~\eta=\sum_{i=1}^{n-1}\xi_i\}.
\end{split}
\end{equation}
The following transformations reduce \eqref{16} to a simpler eigenvalue problem.
Let
\begin{equation*}
\begin{pmatrix}\xi\\ \eta\end{pmatrix}=P\xi,\quad\textrm{where}\quad
P=\begin{pmatrix} 
I \\ \textbf{1} \end{pmatrix}\in \mathbb{R}^{n\times (n-1)}
\end{equation*}
with the identity matrix $I\in \mathbb{R}^{(n-1)\times(n-1)}$ and 
$\textbf{1}\in\mathbb{R}^{n-1}$ being the vector of all $1$'s.
Then, \eqref{16} becomes
\begin{equation}\label{p3}
\lambda_{\mathcal{H}}(\rho^{\infty})=
\min_{\xi\in \mathbb{R}^{n-1}_+}\{ \frac{1}{\Delta x^2}(P\xi)^T B (P\xi)~:~(P\xi)^T(P\xi)=1\},
\end{equation}
where
\begin{equation*}
B=\begin{pmatrix} 
  A & b^{\textrm{T}} \\ 
 b &  2
\end{pmatrix}\in \mathbb{R}^{n\times n} \quad \textrm{with}\quad b^T\in \mathbb{R}^{n-1},\
b=(1,0,\cdots, 0, 1),
\end{equation*}
and $A$ is as above.
%Let $x=(P^TP)^{\frac{1}{2}}\xi\in\mathbb{R}^{n-1}$, we find \eqref{p3} forms 
% \begin{equation*}
%\lambda_{\mathcal{H}}(\rho^{\infty})= \min_{x\in \mathbb{R}^{n-1}}\{x^THx~:~\sum_{i=1}^n x_i^2=1\},
%\end{equation*}
%where \begin{equation*}
%H=(P^TP)^{-\frac{1}{2}}P^TBP (P^TP)^{-\frac{1}{2}}.
%\end{equation*}
%%%In formula \eqref{p3}, we observe the following property. 
%%%\begin{equation}\label{p4}
%%%\lambda_{\mathcal{H}}(\rho^{\infty})=\frac{1}{\Delta x^2}(\textrm{the second smallest eigenvalue of $B$}).
%%%\end{equation}
%\begin{proof}[Proof of \eqref{p4}]
%Notice that 
%\begin{equation*}
%P^TP=I+\textbf{1}\textbf{1}^T\in \mathbb{R}^{(n-1)\times (n-1)},
%\end{equation*}
%%then $P^TP$ is a positive definite matrix. Hence $(P^TP)^{\frac{1}{2}}$ is a well defined 
%invertible matrix, through which we can rewrite \eqref{p3} into a new form.
%Let $x=(P^TP)^{\frac{1}{2}}\xi\in\mathbb{R}^{n-1}$, \eqref{p3} becomes 
% \begin{equation*}
%\lambda_{\mathcal{H}}(\rho^{\infty})= \min_{x\in \mathbb{R}^{n-1}}\{x^THx~:~x^Tx=1\},
%\end{equation*}
%where \begin{equation*}
%H=(P^TP)^{-\frac{1}{2}}P^TBP (P^TP)^{-\frac{1}{2}}.
%\end{equation*}
%Since $H$ is similar to $P^TBP$, we are going to investigate $P^TBP$'s eigenvalue.  

Below, we compute \eqref{p3}.  First, we give explicit formulas
for the eigenvalues and eigenvectors of $B$.  

\begin{lemma}\label{Bspectrum}
Let $n\ge 3$.
For each $k=0,1,\dots, n-1$,  the eigenvalues of $B$ are
\begin{equation*}
\lambda_k=2- 2\cos(\frac{2k\pi}{n}).
\end{equation*}
For $k=0, 1,\cdots , n-1$, the associated
eigenvectors in un-normalized form are:
\begin{equation*}
v_k=(v_k(j))_{j=1}^{n}, \quad w_k=(w_k(j))_{j=1}^{n},
\end{equation*}
where, for $j=1,\cdots, n-1$,
\begin{equation*}
v_k(j)=\sin(\frac{2\pi k j}{n}) ,\quad w_k(j)=\cos(\frac{2\pi k j}{n});
\end{equation*}
and when $j=n$,
\begin{equation*}
v_k(n)=-\sin(\frac{2\pi k j}{n}) ,\quad w_k(j)=-\cos(\frac{2\pi k j}{n}).
\end{equation*}
\end{lemma}
\begin{proof}
The proof is by direct computation.  We just show the details for the case of $j=1$.
We have
\begin{equation*}
\begin{split}
(Bv_k)(1)=&2v_k(1)-v_{k}(2)+v_{k}(n)\\
=&2\sin(\frac{2\pi k }{n})-\sin(\frac{2\cdot 2\pi k}{n})-0\quad \textrm{By double angle formula}\\
=&(1-2\cos\frac{2k\pi}{n})v_k(1).
\end{split}
\end{equation*}
And 
\begin{equation*}
\begin{split}
(Bw_k)(1)=&2w_k(1)-w_{k}(2)+w_{k}(n)\\
=&2\cos(\frac{2\pi k }{n})-\cos(\frac{2\cdot 2\pi k}{n})+1\quad \textrm{By double angle formula}\\
=&(1-2\cos\frac{2k\pi}{n})w_k(1).
\end{split}
\end{equation*}
\end{proof}
Note that in Lemma \ref{Bspectrum}, many eigenvalues are repeated.  As a consequence, obviously
there are only two eigenvectors associated to each repeated eigenvalues, and not four; the repeating
eigenvalues, in fact, have identical pairs $v_k$, $w_k$, up to sign.  
However, the eigenvalue equal to $0$ is simple, with associated eigenvector
$w_0=(1,\cdots, 1, -1)^T$.  Moreover, aside from this $0$ eigenvalue, all other
eigenvalues are positive.

Now, observe that
$P^Tw_0=0$, and therefore the matrix $V=[w_0,P]$ is invertible and
$$BV=V\begin{bmatrix} 0 & 0 \\ 0 & C\end{bmatrix}, $$
where $C\in \mathbb{R}^{n-1,n-1}$.  Further, notice that $P^TP$ is positive
definite and thus it has a unique positive definite square root $(P^TP)^{1/2}$.
Thus, $\xi^TP^TBP\xi$, subject to $(P\xi)^TP\xi=1$, can be rewritten as
$$\xi^TP^TBP\xi=\xi^TP^TPC\xi=\xi^T(P^TP)^{1/2}(P^TP)^{1/2}C(P^TP)^{-1/2}(P^TP)^{1/2}\xi$$
and thus, with $x=(P^TP)^{1/2}\xi$, we end up with the problem
$$\min_{x: \ x^Tx=1} x^T\left[(P^TP)^{1/2}C(P^TP)^{-1/2}\right] x\,.$$
Finally, we notice that the matrix $\left[(P^TP)^{1/2}C(P^TP)^{-1/2}\right]$
is symmetric, and it is obviously similar to $C$, so that indeed 
\begin{equation}\label{21}\begin{split}
&\min_{x: \ x^Tx=1} x^T\left[(P^TP)^{1/2}C(P^TP)^{-1/2}\right] x = \\
&\min_{\xi\in \mathbb{R}^{n-1}_+}\{(P\xi)^T B (P\xi)~:~
(P\xi)^T(P\xi)=1\}=\textrm{The second smallest eigenvalue of $B$}.
\end{split}
\end{equation}

Putting it all together, \eqref{p3} gives
\begin{equation*}
\begin{split}
\lambda_{\mathcal{H}}(\rho^{\infty})=&\frac{1}{\Delta x^2}(\textrm{the second smallest eigenvalue of $B$})\\
=&\frac{1}{\frac{(b-a)^2}{(n-1)^2}}[2-2\cos(\frac{2\pi}{n})]=\frac{4\pi^2}{(b-a)^2}+o(1),
\end{split}
\end{equation*}
and the proof of Theorem \ref{mainClaim} is completed.
\end{proof}
%%%From Theorem \ref{mainClaim}, we see that the graph structure plays a key role in the
%%%convergence rate.  In particular, the cyclic graph structure in the above model
%%%gives a decay rate 4 times that of the lattice case.
%%%Although the above model is for the linear entropy,
%%%Theorem \ref{th12} indicates more than that. 
%%%The convergence rate $\lambda_{\mathcal{F}}(\rho)$ holds for the entire free energy, 
%%%which can contain the interaction energy $\frac{1}{2}\sum_{i=1}^n\sum_{j=1}^nw_{ij}\rho_i\rho_j$. 
%%%The question of ``how the graph structure and interaction energy affect the convergence rate''
%%%deserves further study.  
\end{document}